%% file: paper_prequel.tex
\documentclass{siamart250211}
\usepackage{amssymb,amsmath}
\usepackage{tikz}
\usepackage{pgfplots,pgfplotstable}
\usepackage[hypcap=true]{subcaption}
\usepackage{xcolor}

\pgfplotsset{compat=1.18}
\newsiamthm{claim}{Claim}

\begin{document}

\title{Bounds-constrained finite element approximation of time-dependent partial differential equations\thanks{Submitted to the editors DATE
\funding{This work is supported by the U.S. National Science Foundation grant \#2410736.}}}

\author{Robert C. Kirby\thanks{Department of Mathematics, Baylor University, Waco, TX
(\email{robert\_kirby@baylor.edu}).}
\and John D. Stephens\thanks{Department of Mathematics, Baylor University, Waco, TX (\email{john\_stephens2@baylor.edu})}}

\maketitle

\newcommand{\BibTeX}{{\scshape Bib}\TeX\xspace}
\begin{abstract}
  \input{abstract.tex}
\end{abstract}

\begin{keywords}
Runge-Kutta method, multistep method, finite element method, variational inequality, bounds constraints
\end{keywords}

\begin{MSCcodes}
65M60, 65L06
\end{MSCcodes}

\input{body_prequel.tex}

\section*{Acknowledgements}

The authors would like to thank Ahsan Ali for his help implementing the $\ell$-AIR algorithm.

\bibliographystyle{plain}
\bibliography{paper}

\end{document}

%% file: abstract.tex
Finite elements provide accurate and efficient methods for the numerical solution of partial differential equations by means of restricting variational problems to 
finite-dimensional approximating spaces. However, they do not, in general, guarantee enforcement of bounds constraints inherent in the original problem.
We propose two approaches to enforcing bounds constraints for time-dependent problems.  First, we propose general projective methods which result from a systematic modification of any abstract timestepping scheme.  Second, we present a monolithic technique for which we take a modified formulation of implicit single-stage Runge-Kutta methods and general implicit multistep methods as prototypical examples. By solving a constrained optimization problem, 
we are able to ensure that the bounds constraints are enforced by the approximate solution at the discrete time levels, obtaining (formally) high order methods both in space and time.
Numerical examples for the linear heat and advection equations and nonlinear Allen-Cahn equation are given.

%% file: body_prequel.tex

\section{Introduction}\label{sec:introduction}
Preserving bounds constraints such as positivity in the numerical solution of partial differential equations (PDEs) faces many challenges.
Even ordinary differential equations (ODEs) present complexities.
For example, Bolley and Crouzeix have shown that linear single-step methods that unconditionally preserve positivity cannot exceed first order accuracy~\cite{bolley1978conservation}.
The literature contains many attempts to circumvent this barrier via more general schemes.
For example, inadmissible values may be clipped according to the bounds~\cite{Shampine1986}. 
For production-destruction systems, Patankar-type methods have been developed to ensure positivity~\cite{Bruggeman2007, Burchard2003, Huang2019, Kopecz2018, Patankar1980}.
The contractivity and positivity of diagonally split Runge-Kutta methods have been studied as well~\cite{Bellen1994}.
Additional attempts to break the order barrier of Bolley and Crouzeix can be found in~\cite{Blanes2022}. 

However, such schemes for ODEs are only relevant to PDEs when the spatial discretization preserves bounds constraints.
The famous result of Godunov~\cite{godunov1959finite} shows linear single step monotone schemes for the model convection equation are limited to first order accuracy.
Nochetto and Wahlbin~\cite{nochetto2002positivity} show that positivity-preserving linear operators on finite element spaces can only reproduce linear polynomials, limiting the accuracy of linear methods to second order.
In light of these facts, it is prudent to study the spatial and temporal discretization of bounds-satisfying PDE simultaneously.
As the order barriers apply to linear methods, it seems that some application of \emph{nonlinear} methods may be in order.

Here, we formulate two approaches for preserving bounds constraints in the discretization of time-dependent PDEs.
First, we can postprocess some fully discrete scheme (for example, finite elements in space and Runge--Kutta or multistep in time) by means of a nonlinear projection into a bounds-satisfying set.  We refer to these schemes as \emph{projective}.
In contrast, we also propose \emph{monolithic} schemes.
Here, we replace the spatial variational equation defining the new value $y_{n+1}$ with a variational inequality that enforces the constraints.

In~\cite{chang2017variational}, variational inequalities are used to enforce bounds constraints for piecewise linear discretizations of  advection-diffusion equations, and in~\cite{kirby2024variational}, this idea is extended to provide uniformly-constrained finite element approximations for steady-state 
problems with elements of arbitrary order.
The analysis there adapts Falk's analysis for variational inequalities~\cite{falk1974error} to the special case where the continuous problem is a variational equation.
Similar techniques providing nodally bounds-preserving methods are given in~\cite{barrenechea2024nodally} for steady-state problems, and~\cite{amiri2024nodally} for convection-diffusion-reaction equations. 
In these works, a stabilized finite element approach is devised which is equivalent to a variational inequality over a discrete feasible set. In~\cite{barrenechea2025nodally}, the drift-diffusion equation is discretized in time using the implicit Euler method and a discontinuous Galerkin spatial discretization with a variational inequality.

An important alternative approach to bounds enforcement is the class of proximal Galerkin methods~\cite{keith2023proximal}.
Here, the original problem is replaced with a sequence of regularized and reformulated problems, each of which preserves bounds.
Although these methods are not formulated cleanly as a single variational problem, they support high orders of approximation and provide scalable, mesh-independent convergence rates. 
Additional techniques which do not rely directly on the theory of variational inequalities include a convex optimization-based filtering approach, introduced in~\cite{zala2023convex}, where the numerical solution 
is post-processed using a sample-and-correct approach; after each time step, the numerical solution is sampled at points throughout the domain, and the filter is then used on any cells 
which exhibit bounds violations.
Finally, the work in~\cite{nusslein2021positivity} poses a linear program to find weights for Runge--Kutta stages that will preserve positivity and the order conditions of the underlying method.

Invariant-domain-preserving discretizations~\cite{ern2022invariant} capture many deep physical properties of conservation laws.
However, these techniques depend quite strongly on the particular structure of the equations being studied and may not be applicable beyond those hyperbolic problems.
The methods we develop here typically do not capture as much structure as those methods, but they are readily formulated for a much more general class of PDEs.

The rest of the paper is organized as follows: 
in Section~\ref{sec:problemSetting} we introduce the problem setting and model problems.  We develop our two approaches to preserving bounds in Section~\ref{sec:methodDev}.
Section~\ref{sec:approxTheory} contains some comments on analysis.
After presenting numerical results in Section~\ref{sec:examples}, we offer concluding remarks and directions for future research in in Section~\ref{sec:conclusion}.


\section{Problem Setting}\label{sec:problemSetting}

Let $\mathcal{V}$ and $\mathcal{H}$ be Hilbert spaces with $\mathcal{V}$ compactly embedded in $\mathcal{H}$. Let $(\cdot, \cdot)$ be the inner product on $\mathcal{H}$, and $\langle \cdot, \cdot \rangle$ the duality pairing on $\mathcal{V}'\times \mathcal{V}$. 
For each time $t>0$, let $F_t:\mathcal{V}\rightarrow \mathcal{V}'$ be a bounded mapping and consider the 
abstract evolution equation seeking some differentiable $y:\mathbb{R}^+\rightarrow \mathcal{V}$ such that 
\begin{equation}\label{eq:model_problem}
(y', v) = \langle F_t(y), v\rangle \qquad \forall v\in \mathcal{V},
\end{equation}
together with an appropriate initial condition 
\begin{equation}
  y(0) = y_0\in \mathcal{V}.
\end{equation}
We further suppose that the solution $y(t)$ is known to lie in some closed and convex set $\mathcal{K} \subset \mathcal{V}$ for all time, such as the set of nonnegative functions.

Standard Galerkin discretization of~\eqref{eq:model_problem} follows by restricting the problem to some finite-dimensional $\mathcal{V}_h \subset \mathcal{V}$ and evolving the resulting system of ODEs.
However, we can introduce a slightly more general framework that accounts for discontinuous Galerkin and other nonconforming schemes.  We can let $\mathcal{V}_h \subset \mathcal{H}$ instead of $\mathcal{V}$, let $\langle \cdot , \cdot \rangle$ be the $\mathcal{H}^\prime,\mathcal{H}$ duality, and consider a discrete evolution equation
\begin{equation}\label{eq:discmodel_problem}
(y', v) = \langle F_{h,t}(y), v\rangle \qquad \forall v\in \mathcal{V}_h.
\end{equation}
Here, $F_h$ is some possibly mesh-dependent approximation to the original $F$, for example, including jump terms arising in discontinuous Galerkin methods.
Taking $\mathcal{V}_h \subset \mathcal{V}$ and $F_{h,t} = F_{t}$ recovers the standard conforming method.

\subsection{Model Problems}
To fix notation for our model PDE problems,
we take $L^2(\Omega)$ to be the standard space of square-integrable functions over some domain $\Omega \in \mathbb{R}^d$ and $H^1(\Omega)$ its subspace of functions with square-integrable weak derivatives of order 1.  Additionally, $H^1_0(\Omega) \subset H^1(\Omega)$ contains only functions vanishing on $\partial \Omega$.

Our first PDE to consider is the heat equation, posed on $\Omega \subset \mathbb{R}^d$ for $1 \leq d \leq 3$.
Let  $f: [0, T] \rightarrow L^2(\Omega)$. 
We seek $u: [0, T] \rightarrow H^1(\Omega)$ such that 
\begin{equation}\label{eq:heat_general}
    \left( u', v \right) + \left( \nabla u, \nabla v \right) = \left(f, v \right)
\end{equation}
for all $v \in H_0^1(\Omega)$ and $0 \leq t \leq T$.  We close the problem with an initial condition
\begin{equation}
  u(0) = u_0 \in H^1(\Omega),
\end{equation}
and the Dirichlet boundary condition
\begin{equation}\label{eq:heat_bc}
u(t)|_{\partial \Omega} = g(t) \qquad \text{for } t\in [0, T].
\end{equation}

Next, we consider an advective equation. Let $\Omega \subset \mathbb{R}^d$ for $1\leq d\leq 3$. Let $\Gamma = \partial \Omega$, and $\mathbf{n}$ denote the unit outward normal vector of $\Omega$.  Let $\mathbf{v}$ be a prescribed (possibly time dependent) divergence-free vector field.
Seek $u:[0, T]\rightarrow L^2(\Omega)$ such that 
\begin{equation}\label{eq:advec_eq}
\frac{\partial u}{\partial t} + \nabla \cdot (\mathbf{v}u) = 0.
\end{equation}
We close the system by imposing the inflow boundary condition
\begin{equation}\label{eq:advec_bc}
  u|_{\Gamma_\text{inflow}}(t) = u_\text{in}(x, t) \qquad \text{ for } t\geq 0,
\end{equation}
where $\Gamma_\text{inflow}(t) = \{x\in \Gamma : \mathbf{v}(x, t)\cdot \mathbf{n} < 0\}$, 
and the initial condition
\begin{equation}\label{eq:advec_ic}
  u(0) = u_0\in L^2(\Omega).
\end{equation}
We utilize a discontinuous Galerkin approximation with an upwind flux. The weak form is standard and will be discussed in Section~\ref{sec:examples}.

Finally, we consider the Allen-Cahn equation~\cite{allen1979microscopic} with logarithmic Flory-Huggins potential~\cite{flory1942thermodynamics, huggins1941solutions}. Consider the potential function
\begin{equation}
  F(s) = \frac{\theta_0}{2}\left[(1 + s)\ln(1 + s) + (1 - s)\ln(1 - s)\right] - \frac{\theta_c}{2}s^2,
\end{equation}
where $\theta_0 < \theta_c$ are positive constants. The Allen-Cahn equation seeks $u:[0, T]\rightarrow H^1(\Omega)$ such that
\begin{equation}\label{eq:allen_cahn}
  (u_t, v) + (\nabla u, \nabla v) + (f(u), v) = 0 \qquad \forall v\in H^1(\Omega),
\end{equation}
where 
\begin{equation}
  f(u) = F'(u)
\end{equation}
is the chemical potential. We close the system by imposing periodic boundary conditions. Physical and mathematical constraints require that $-1 < u < 1$ for all positive times, providing an interesting nonlinear problem 
on which to test our methods.

\subsection{Representing Bounds Constrained Polynomials}

We let $\mathcal{V}_h$ denote the finite-dimensional space, typically a subspace of $H^1(\Omega)$ or $L^2(\Omega)$.   
Although a Galerkin method delivers an approximation that is well-defined, independent of the choice of basis, techniques for enforcing bounds constraints depend rather delicately on this choice.
Determining the positivity of a multivariate polynomial is NP-hard~\cite{lasserre2007sum}, so here we restrict ourselves to some workable, if inexact approaches.

Let $\mathcal{P} = \{ \psi_i \}_{i=1}^{\dim\mathcal{V}_h}$ be a basis for $\mathcal{V}_h$, which we will typically take either to be the piecewise Lagrange ($\mathcal{P} = \mathcal{L}_k$) or Bernstein polynomials ($\mathcal{P} = \mathcal{B}_k$), with $k$ denoting the local polynomial degree. 
Then, we define the set
\begin{equation}
  \mathcal{J}^{\mathcal{P}, I}_h
  = \left\{ \sum_{i=1}^{\dim \mathcal{V}_h} c_i \psi_i : c_i \in I \right\},
\end{equation}
to comprise only members of $\mathcal{V}_h$ whose coefficients relative to the basis $\mathcal{P}$ satisfy the bounds constraints. Here, $I$ may be a closed interval of the form $[m, M]$, or a half-open interval of the form $(-\infty, M]$ or $[m, \infty)$.  This notation applies equally well to discontinuous polynomial spaces.

When $\mathcal{P}$ consists of piecewise Lagrange polynomials, $\mathcal{J}_h^{\mathcal{L}, I}$ contains polynomials that satisfy the bounds constraints at the interpolating nodes.
In this case, every member of $\mathcal{V}_h$ with range in $I$ lies in $\mathcal{J}_h^{\mathcal{L}, I}$, but so do many that do not -- the bounds are readily violated between nodes.  Figure~\ref{fig:edgecaselagrange} shows a simple example of a univariate polynomial on $[0, 1]$ for which this is the case.

In contrast, suppose that $\mathcal{P}$ is be the basis of piecewise Bernstein polynomials.  On the unit interval, the Bernstein basis of degree $n$
is given by
\begin{equation}
  b^n_i(x) = \binom{n}{i} x^i (1-x)^{n-i}, \ \ \ 0 \leq i \leq n.
\end{equation}
These polynomials give a nonnegative partition of unity forming a basis for polynomials of degree $n$.  They are readily mapped to any compact interval $[a, b]$, and given their geometric decomposition~\cite{arnold2009geometric}, they can be easily assembled across cells to form $C^0$ piecewise polynomials.  The cubic basis is given in Figure~\ref{fig:bern3}.

\begin{figure}
  \begin{center}
    \begin{tikzpicture}
      \begin{axis}[width=0.48\textwidth, grid style={line width=.1pt, draw=gray!10}, legend style={nodes={scale=0.7, transform shape}}, minor tick num=4, major grid style={line width=.2pt,draw=gray!50}, axis lines=middle,
            axis line style={latex-latex},
            legend pos=north east,
            ymin=-0.2,
            ymax=1,
            ytick={0.0, 0.5, 1.0},
            yticklabels={$0$, $0.5$, $1$},
            xtick={0,0.5,1}, 
            xticklabels={$0$, $0.5$, $1$},
            xlabel=$x$]

      \addplot[domain=0:1, samples=100, color=black] {(1-x)^3};
      \addplot[domain=0:1, samples=100, color=black] {3*x * (1-x)^2};
      \addplot[domain=0:1, samples=100, color=black] {3*(x)^2 * (1-x)};
      \addplot[domain=0:1, samples=100, color=black] {x^3};
      \end{axis}
  \end{tikzpicture}
  \end{center}
  \caption{Cubic Bernstein polynomials.}
  \label{fig:bern3}
\end{figure}

The Bernstein polynomials possess a convex hull property -- the graph of
\(
p(x) = \sum_{i=0}^n c_i b^n_i(x)
\)
for $x \in [0, 1]$ 
lies in the convex hull of its control net $\left\{\left( \tfrac{i}{n}, c_i \right)\right\}_{i=0}^n$~\cite{LaiSch07}.  So, if the coefficients $c_i \in I$, then the polynomial can only take values in $I$.
However, there exist polynomials that are uniformly positive on $[0, 1]$ but have negative coefficients in the Bernstein basis, as shown in
Figure~\ref{fig:edgecasebernstein}.
This situation is the opposite of the Lagrange basis -- every member of $\mathcal{J}_h^{\mathcal{B}, I}$ is bounds-constrained, but we only have a proper subset of constrained polynomials.

\begin{figure}
  \centering
  \begin{subfigure}[t]{.48\textwidth}
    \centering
    \begin{tikzpicture}
      \centering
        \begin{axis}[width=\textwidth, grid style={line width=.1pt, draw=gray!10}, legend style={nodes={scale=0.7, transform shape}}, minor tick num=4, major grid style={line width=.2pt,draw=gray!50}, axis lines=middle,
            axis line style={latex-latex},
            legend pos=north east,
            ymin=-0.2,
            ymax=1,
            ytick={0.0, 0.5, 1.0},
            yticklabels={$0$, $0.5$, $1$},
            xtick={0,0.5,1}, 
            xticklabels={$0$, $0.5$, $1$},
            xlabel=$x$]
          \addplot[domain=0:1, samples=100, color=black] {0.01*2*(x-0.5)*(x-1) - 0.01*4*x*(x-1) + 2*x*(x-0.5)};
          \addlegendentry{$0.01\ell_0(x) + 0.01\ell_1(x) + 1\ell_2(x)$}
        \end{axis}
    \end{tikzpicture}
    \subcaption{A polynomial with positive coefficients in the Lagrange basis that is not uniformly positive.}
    \label{fig:edgecaselagrange}
  \end{subfigure}\hspace{0.04\textwidth}%
  \begin{subfigure}[t]{.48\textwidth}
    \centering
    \begin{tikzpicture}
      \centering
        \begin{axis}[width=\textwidth, grid style={line width=.1pt, draw=gray!10}, legend style={nodes={scale=0.7, transform shape}}, minor tick num=4, major grid style={line width=.2pt,draw=gray!50}, axis lines=middle,
            axis line style={latex-latex},
            legend pos=north east,
            ymin=-0.2,
            ymax=1,
            ytick={0.0, 0.5, 1.0},
            yticklabels={$0$, $0.5$, $1$},
            xtick={0,0.5,1}, 
            xticklabels={$0$, $0.5$, $1$},
            xlabel=$x$]
          \addplot[domain=0:1, samples=100, color=black] {(1-x)^2 -1.8*x*(1-x)+x^2};
          \addlegendentry{$1b_0(x) -0.9b_1(x) + 1b_2(x)$}
        \end{axis}
    \end{tikzpicture}
    \subcaption{A uniformly positive polynomial with a negative coefficient in the Bernstein basis.}
    \label{fig:edgecasebernstein}
  \end{subfigure}
  \caption{(Left): An admissible polynomial in the nodally-constrained Lagrange basis. (Right): An inadmissible polynomial in the uniformly-constrained Bernstein basis.}
  \label{fig:edgecasepolys}
\end{figure} 

The Bernstein polynomials are easily extended to the $d$-simplex, and analogous results hold. In particular, let $T\subset \mathbb{R}^d$ nondegenerate simplex. Let $\mathrm{b} = (b_0, \dots b_d)$ be the vector of barycentric coordinates of $T$. Then, for any multiindex $\alpha$ with $|\alpha| = n$, the associated Bernstein basis polynomial of degree $n$
is given by 
\begin{equation}
  B^\alpha = \frac{n!}{\alpha!} \mathbf{b}^{\alpha}.
\end{equation}
The Bernstein basis of degree $n$ is given by the collection
\begin{equation}
  \left\{B^\alpha\right\}_{|\alpha| = n}.
\end{equation}
The multivariate Bernstein basis is also geometrically decomposed, making the assembly of a $C^0$ piecewise finite element basis straightforward. 
In the event we are discretizing a subset of $L^2(\Omega)$, say, with a discontinuous Galerkin method, the interelement continuity need not be imposed for the bounds constraints to hold.

We define $\mathcal{J}^{\mathcal{P}, [m,M]}_h$ to be those members of $\mathcal{V}_h$ whose degrees of freedom in the chosen basis (either Lagrange or Bernstein) satisfy the bounds constraints.  
With the Lagrange basis, we obtain all members of $\mathcal{V}_h$ that satisfy the bounds constraints uniformly, but have an outside approximation since members may violate those bounds between the Lagrange nodes.
With the Bernstein basis, $\mathcal{J}_h^{\mathcal{B}, [m,M]}$ contains only functions uniformly satisfying the bounds constraints, but not all of them.

This construction, in either the Lagrange or Bernstein basis, allows us to define (discretely) feasible sets.   
For example, discretizing the heat equation with nonnegative values gives $\mathcal{K}_h = \mathcal{J}_h^{\mathcal{P}, [0, \infty)}$, 
  while constraining the quantity parameter $q$ in the advection equation gives $\mathcal{K}_h = \mathcal{J}_h^{\mathcal{P}, [m, M]}$.

While we focus here on constraining the degrees of freedom in the Lagrange and Bernstein bases, there are additional approaches to ensuring uniformly-constrained polynomial approximations. In~\cite{dzanic2025method}, bounding boxes are 
constructed once by solving a constrained optimization problem which may then be used to produce polynomials in the chosen basis which are uniformly-constrained. 
Introducing such constraints into the methods presented here would allow for the construction of uniformly-constrained methods with other bases, but would require the use of solvers which support more general inequality constraints. We restrict 
our attention to the uniformly-constrained Bernstein basis and the nodally-constrained Lagrange basis.

\section{Method Development}\label{sec:methodDev}

We now formulate two kinds of new discretizations.
First, \emph{projective} methods compose a standard time discretization with a nonlinear projection into the feasible set.
Also, certain methods admit a monolithic formulation, discretized directly with a variational inequality.

Before proceeding, we summarize the main structure of Runge--Kutta and multistep methods.
We partition the time interval $[0, T]$ into intervals
$(t^n, t^{n+1})$ with $t^0 = 0$ and $t^N = T$.
As a notational convenience, we will assume a uniform time step size of $t^{n+1} - t^n = k$.
There is no practical limitation of Runge-Kutta methods to uniform step sizes, but care must 
be taken when working with multistep methods.
The bounds-constrained methods presented here do not require additional restrictions.
Throughout, $y^{n}$ denotes an approximation to the exact solution at time $t^{n}$.

Typically, an $s$-stage Runge-Kutta method is encoded by a Butcher tableau
\begin{equation*}
  \begin{array}{c|c}
    \mathbf{c} & A \\ \hline
    & \mathbf{b}
  \end{array},
\end{equation*}
where $\mathbf{b}, \mathbf{c} \in \mathbb{R}^s$ and $A \in \mathbb{R}^{s \times s}$.

Applying a Runge-Kutta scheme to~\eqref{eq:model_problem} requires constructing the new solution by first solving a variational problem for stage variables.
We define $\mathcal{V}_h^s$ as the $s$-way Cartesian product $\mathcal{V}_h^s = \prod_{i=1}^s \mathcal{V}_h = \mathcal{V}_h \times \mathcal{V}_h \times \dots \times \mathcal{V}_h$.

Given an approximation $y^n$ to $y$ at time $t^n$, the Runge-Kutta scheme seeks $Y \in \mathcal{V}_h^s$ such that
\begin{equation}
  \label{eq:stagedef}
  \left(Y_i, v_i\right)
  = \left( y^n, v_i \right) + k \sum_{j=1}^s A_{ij}
  \langle F_{h,t^n + \mathbf{c}_j k}(Y_j), v_i \rangle
\end{equation}
for all $v_i \in \mathcal{V}_h, 1 \leq i \leq s$.
The stage values $Y_j$ approximate the solution $y$ at time $t^n +\mathbf{c}_j k$.
We may write this abstractly as a variational problem seeking $Y \in \mathcal{V}^s_h$ such that
\begin{equation}
  \label{eq:abstractstage}
  \langle \mathcal{F}(Y), V\rangle = 0, \qquad \forall V \in \mathcal{V}_h^s.
\end{equation}

When the Butcher matrix $A$ is strictly lower triangular, the method is explicit, and the stages $Y_i$ may be computed sequentially with a simple mass matrix to invert each time.

Then, $y^{n+1} \in \mathcal{V}_h$ arises from the much simpler variational problem
\begin{equation}
  \label{eq:updatestage}
  \left( y^{n+1}, v \right)  = \left( y^n, v \right)
  + k \sum_{j=1}^s b_j \langle F_{h, t_n + \mathbf{c}_j k}(Y_j), v \rangle \qquad \forall v\in \mathcal{V}_h.
\end{equation}
This must be posed variationally since $F_{h,t}$ maps $\mathcal{V}_h$ into its dual.
For stiffly-accurate Runge-Kutta methods, this extra variational problem is not needed, as we have just $y^{n+1} = Y_s$, the final stage value.

In certain simple cases, Runge--Kutta schemes can be posed directly for $y^{n+1}$.
These include single-stage methods (theta methods including forward and backward Euler and the implicit midpoint rule) as well as two-stage methods with an explicit first stage such as Crank-Nicolson.  In this case, we write a variational problem for $y^{n+1}$ in the abstract form
\begin{equation}
  \langle \mathcal{F}(y_{n+1}; y_n, k), v \rangle = 0\qquad \forall v\in\mathcal{V}_h.
\end{equation}

We also consider general multistep methods.
Given a sequence of approximations $\{ y^{n-s+j} \}_{j=1}^s$,
to the solution of~\eqref{eq:model_problem} at the discrete times $t^{n-s+j}$,
an $s$-step multistep method gives an
approximation $y^{n+1}$ at time $t^{n+1}$
via the solution of the variational problem seeking $y^{n + 1}\in \mathcal{V}_h$ such that
\begin{equation}\label{eq:BDF_general}
  \sum_{j = 0}^s a_j (y^{n-s+j+1}, v) =
  k\sum_{j = 0}^s b_j \langle F_{h,t^{n-s+j+1}}(y^{n-s+j+1}), v\rangle, \qquad \forall v\in \mathcal{V}_h.
\end{equation}

Particular choices of the $a$ and $b$ coefficients lead to well-known multistep schemes, such as the backward difference (BDF) schemes and Adams-type methods~\cite{bashforth1883attempt, moulton1926new}.  For example, the 2-step BDF method produces the approximation at time $t^{n + 1}$ by solving the variational problem seeking $y^{n + 1}\in \mathcal{V}_h$ such that
\begin{equation}
  (y^{n + 1}, v) - \frac{4}{3}(y^n, v) + \frac{1}{3}(y^{n - 1}, v) = \frac{2}{3}k\langle F_{h,t^{n + 1}}(y^{n + 1}), v\rangle\qquad\forall v\in \mathcal{V}_h.
\end{equation}
Multistep methods may also be written as an abstract variational problem seeking $y^{n + 1}\in \mathcal{V}_h$ such that 
\begin{equation}
  \langle \mathcal{F}(y^{n+1}; Y^n), v\rangle = 0\qquad \forall v\in \mathcal{V}_h,
\end{equation}
where $Y^n = [y^n, \dots, y^{n - s + 1}]$ contains the approximations at the previous time steps. 

Our presentation of projective methods will benefit from a simple abstract notation unifying Runge--Kutta and multistep schemes.
Here, we introduce a mapping
$\Phi:\mathcal{V}_h^s \rightarrow \mathcal{V}$, where $s=1$ for Runge-Kutta schemes and $s>1$ for multistep.  Putting $Y^n = [y^n, y^{n-1}, \dots, y^{n - s + 1}] \in \mathcal{V}_h^s$, any of these methods takes the form
\begin{equation}
  y^{n+1} = \Phi(Y^n),
\end{equation}
where $\Phi$ represents the computation required to compute $y^{n+1}$ from one or more previous values of solution by a particular scheme.


\subsection{Projective Methods}


The projection of a function onto a closed and convex subset requires the solution of a constrained variational inequality, or, equivalently, a constrained minimization problem. Recall the following definition (see, e.g., \cite{kinderlehrer1980}):
\begin{definition}
For a Hilbert space $\mathcal{H}$ with inner product $(\cdot, \cdot)$, and a closed and convex set $\mathcal{K}\subset \mathcal{H}$, let $\mathrm{P}_\mathcal{K}(u)$ denote 
the unique solution $u_c\in \mathcal{K}$ to
\begin{equation}
(u_c - u, v - u_c) \geq 0\quad \forall v\in \mathcal{K}.
\end{equation}
\end{definition}
This is equivalent to $u_c$ being the unique minimizer of $\inf_{v\in\mathcal{K}} ||u - v||$.

Given a method $\Phi$, the projective method is obtained by composition with the nonlinear projection. Let $\Phi_{\mathcal{K}_h}: \mathcal{V}_h^s\rightarrow \mathcal{K}_h$ be given by
\begin{equation}
  \Phi_{\mathcal{K}_h}(Y) = \left(\mathrm{P}_{\mathcal{K}_h} \circ \Phi\right)(Y)
\end{equation}
for initial data $Y\in \mathcal{V}_h^s$. The projective method $\Phi_{\mathcal{K}_h}$ can then be used in place of $\Phi$ to compute a sequence of approximations $\{y^{n}\}_{n = 1}^N\subset \mathcal{K}_h$.

This approach can be composed with any time-stepping method to produce
a discretely feasible solution at each time level $t^n$.
Moreover, it is non-invasive so that any fast solvers available for the unconstrained method such as those in~\cite{abu2022monolithic,clines2022efficient,mmg,masud2021new,southworth2022fast2,vanlent2005} are directly applicable.

\subsection{Monolithic Methods}\label{sec:monolithicMethods}

Projection-type methods provide an effective and straightforward method for the enforcement of bounds constraints.
This turns out to be quite sufficient in many cases, as we will see in our numerical results.
However, solvers for nonlinear problems may require that intermediate states be feasible, which projective methods cannot do.

Suppose that our time-stepper $\Phi$ amounts to solving a variational equation for $y^{n+1} \in \mathcal{V}_h$ of the form
\begin{equation}
  \langle \mathcal{F}(y^{n+1}; Y^n), v \rangle = 0,
\end{equation}
for $v \in \mathcal{V}_h$, 
where $y^{n+1}$ is the new solution and $Y^n$ is the vector of prior solutions.  This formulation captures single-stage Runge--Kutta methods and multistep methods.

Here, as in~\cite{barrenechea2025nodally,kirby2024variational}, we simply replace the variational equation with the variational inequality
seeking $y_c^{n+1} \in \mathcal{K}_h$ such that
\begin{equation}
  \langle \mathcal{F}(y_c^{n+1}; Y^n), v - y_c^{n+1} \rangle \geq 0
\end{equation}
for all $v \in \mathcal{K}_h$.

For example, the implicit midpoint method may be written as a variational inequality seeking $y_c^{n + 1}\in \mathcal{K}_h$ such that 
\begin{equation}
  \left(
  (y_c^{n + 1}, v - y_c^{n + 1}\right) \geq (y^n, v - y_c^{n + 1}) + k \left\langle F_{t^{n + 1/2}}\left(\frac{y_c^{n + 1} + y^{n}}{2}\right), v - y_c^{n+1}\right\rangle
\end{equation}
for all $ v\in \mathcal{K}_h.$  Similarly, the BDF(2) method can be reformulated to seek $y_c^{n + 1}\in \mathcal{K}_h$ such that  
\begin{equation}
  \left( y_c^{n+1} - \tfrac{4}{3} y^n + \tfrac{1}{3} y^{n-1} , v - y_c^{n+1} \right)
  \geq \frac{2}{3}k\langle F_{t^{n + 1}}(y_c^{n + 1}), v - y_c^{n + 1}\rangle
\end{equation}
for all $v \in \mathcal{K}_h$.

There are fundamental differences between this approach and the projective methods above.
We must solve a more complex discrete variational inequality, and solvers and preconditioners for the unconstrained problem may not apply directly.
Our numerical results indicate that the two approaches yield similar numerical solutions for linear problems.
Although the projective methods are significantly faster than monolithic ones, nonlinear problems may benefit from greater robustness of monolithic schemes.

\subsection{Preservation of Other Quantities}
PDEs may have other conserved quantities like mass or energy.  Even if the underlying spatial discretization preserves these, the postprocessing or monolithic modifications of the schemes we have introduced may not.
However, we can pose our variational inequalities over
any closed and convex approximating set.

For instance, 
in the advection problem~\eqref{eq:advec_eq}, the solution satisfies bounds constraints in addition to the linear invariant
\begin{equation}
  \int_\Omega u \mathrm{dx} = \int_\Omega u_0 \mathrm{dx}.
\end{equation}
Preserving only the bounds constraints, the discrete feasible set
\begin{equation}
  \mathcal{K}_h = \mathcal{J}_h^{\mathcal{P}, [m, M]}
\end{equation}
may be used. If the mass constraint is also desired, then the feasible set becomes
\begin{equation}
  \mathcal{K}_h =  \mathcal{J}_h^{\mathcal{P}, [m, M]}\cap \left\{u \in L^2(\Omega) : \int_\Omega u \mathrm{dx} = \int_\Omega u_0 \mathrm{dx}\right\}.
\end{equation}
However, this may complicate the implementation, and the optimization solver must support the additional constraints.

\section{Some notes on analysis}\label{sec:approxTheory}

The following analysis relies heavily on the nonexpansivity of projections:

\begin{theorem}[Kinderlehrer\cite{kinderlehrer1980}]\label{thm:nonexpansivity}
  Let $\mathcal{H}$ be a Hilbert space and $\mathcal{K}$ a closed and convex subset. Then, for any $u, v\in \mathcal{H}$,
  \begin{equation*}
    ||\mathrm{P}_\mathcal{K} u - \mathrm{P}_\mathcal{K}v ||\leq ||u - v||.
  \end{equation*}
\end{theorem}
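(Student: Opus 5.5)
The plan is to derive the estimate directly from the variational characterization of the projection given in the Definition above. Throughout, write $u_c = \mathrm{P}_\mathcal{K} u$ and $v_c = \mathrm{P}_\mathcal{K} v$. If $u_c = v_c$ the claim is trivial, so we may assume they differ. Existence and uniqueness of the projection onto a closed convex set in a Hilbert space are taken as part of the Definition.

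First, we write down the two variational inequalities, each tested with the other projection. Both $u_c$ and $v_c$ lie in $\mathcal{K}$, so each is an admissible test element in the other's inequality. This gives
\begin{equation*}
(u_c - u, v_c - u_c) \geq 0, \qquad (v_c - v, u_c - v_c) \geq 0.
\end{equation*}

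Next, we add the two inequalities and rearrange. The sum is $\bigl((u_c - v_c) - (u - v), v_c - u_c\bigr) \geq 0$. Expanding and moving terms across yields
\begin{equation*}
\|u_c - v_c\|^2 \leq (u - v, u_c - v_c).
\end{equation*}
The Cauchy--Schwarz inequality bounds the right-hand side by $\|u - v\|\,\|u_c - v_c\|$. Dividing by $\|u_c - v_c\| > 0$ then gives $\|\mathrm{P}_\mathcal{K} u - \mathrm{P}_\mathcal{K} v\| \leq \|u - v\|$.

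There is no real obstacle in this argument. The only point needing care is the sign bookkeeping when adding the inequalities: the test directions $v_c - u_c$ and $u_c - v_c$ are negatives of one another, so the two inner products combine into a single term involving $u_c - v_c$.
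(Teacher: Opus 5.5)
Your argument is correct: testing each variational inequality with the other projection, adding them to get $\|\mathrm{P}_\mathcal{K}u-\mathrm{P}_\mathcal{K}v\|^2\le (u-v,\mathrm{P}_\mathcal{K}u-\mathrm{P}_\mathcal{K}v)$, and finishing with Cauchy--Schwarz is the standard proof. The paper does not prove this theorem itself; it cites Kinderlehrer--Stampacchia, where the result is established by this same argument.
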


If the solution to the evolution equation~\eqref{eq:model_problem} is (discretely) admissible, $y(t)\in \mathcal{K}_h$ for all times $t>0$, a very general result holds:
\begin{proposition}\label{prop:generalProjError}
  Let $\Phi$ denote a time-stepping scheme. Let $Y^n\in\mathcal{V}_h^s$ be a sequence of approximations. Suppose $y(t)\in \mathcal{K}_h$ for all $t > 0$. Then, for all $n > 0$
  \begin{equation}
    \left|\left|y(t^{n + 1}) - \Phi_{\mathcal{K}_h}(Y^{n})\right|\right| \leq \left|\left|y(t^{n + 1}) - \Phi(Y^{n}) \right|\right|
  \end{equation}
\end{proposition}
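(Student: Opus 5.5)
The proof will be a direct application of Theorem~\ref{thm:nonexpansivity}, using the hypothesis that the exact solution is discretely admissible. The key observation is that a projection fixes every point of its target set. Since $y(t^{n+1}) \in \mathcal{K}_h$ by assumption, we have $\mathrm{P}_{\mathcal{K}_h} y(t^{n+1}) = y(t^{n+1})$.

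To see this, let $u_c = y(t^{n+1})$ in the definition of $\mathrm{P}_{\mathcal{K}_h}$. The variational inequality $(u_c - y(t^{n+1}), v - u_c) \geq 0$ then holds trivially, since the left-hand side is zero. By uniqueness of the solution, $u_c$ is the projection. Equivalently, $y(t^{n+1})$ attains the infimum $0$ of $\|y(t^{n+1}) - v\|$ over $v \in \mathcal{K}_h$.

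With this in hand, write $\Phi_{\mathcal{K}_h}(Y^n) = \mathrm{P}_{\mathcal{K}_h}(\Phi(Y^n))$ and rewrite the left-hand side as
\begin{equation*}
\left\|y(t^{n+1}) - \Phi_{\mathcal{K}_h}(Y^n)\right\| = \left\|\mathrm{P}_{\mathcal{K}_h} y(t^{n+1}) - \mathrm{P}_{\mathcal{K}_h}\Phi(Y^n)\right\|.
\end{equation*}
Theorem~\ref{thm:nonexpansivity}, applied with $u = y(t^{n+1})$ and $v = \Phi(Y^n)$, bounds this by $\|y(t^{n+1}) - \Phi(Y^n)\|$, which is the claimed inequality.

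The only point needing care is the setting: the Hilbert space in which the projection and the norm are taken. The projection is defined in $\mathcal{H}$ with its inner product. We need $\mathcal{K}_h$ to be a closed convex subset of that space, and both $y(t^{n+1})$ and $\Phi(Y^n)$ to lie in it.

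Convexity of $\mathcal{K}_h = \mathcal{J}_h^{\mathcal{P},I}$ is clear, because coefficient constraints in an interval are convex. Closedness follows because $\mathcal{V}_h$ is finite-dimensional and the coefficient functionals are continuous on it. The map $\Phi$ lands in $\mathcal{V}_h$, or at least in $\mathcal{H}$, where the projection is defined.

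So the mild obstacle is simply being explicit that the norm is the $\mathcal{H}$-norm and that the projection is the $\mathcal{H}$-orthogonal one. Beyond that, the argument is a two-line consequence of nonexpansivity together with the fixed-point property.
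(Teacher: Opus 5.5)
Your proposal is correct and matches the paper's proof: you use $y(t^{n+1})\in\mathcal{K}_h$ to write $y(t^{n+1}) = \mathrm{P}_{\mathcal{K}_h}(y(t^{n+1}))$, then apply the nonexpansivity of Theorem~\ref{thm:nonexpansivity} to $\mathrm{P}_{\mathcal{K}_h}(y(t^{n+1})) - \mathrm{P}_{\mathcal{K}_h}(\Phi(Y^n))$. Your justification of the fixed-point property, and of the closedness and convexity of $\mathcal{K}_h$ in $\mathcal{H}$, supplies details the paper leaves implicit.
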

\begin{proof}
  Suppose $y(t)\in \mathcal{K}_h$ for all $t > 0$. Then, by the definition of $\Phi_{\mathcal{K}_h}$ and Theorem~\ref{thm:nonexpansivity},
  \begin{multline*}
    \left|\left|y(t^{n + 1}) - \Phi_{\mathcal{K}_h}(Y^n)\right|\right| = \left|\left|y(t^{n + 1}) - \mathrm{P}_{\mathcal{K}_h}(\Phi(Y^n))\right|\right|\\ = \left|\left|\mathrm{P}_{\mathcal{K}_h}(y(t^{n + 1})) - \mathrm{P}_{\mathcal{K}_h}(\Phi(Y^{n}))\right|\right| \leq \left|\left| y(t^{n + 1}) - \Phi(Y^{n})\right|\right|
  \end{multline*}
\end{proof}

This result only applies in a limited case -- bounds-preserving semidiscrete systems with discretely feasible solution -- it allows for an elegant error result. After each time step, the error accumulated by the bounds constrained scheme 
is less than that accumulated by the unconstrained method.
Established estimates for the unconstrained method $\Phi$, therefore, apply immediately.

When the underlying PDE is bounds-constrained but the spatial discretization is not, another level of analysis is required.
When we do not have that $y(t)\in \mathcal{K}_h$, the following may be said:
\begin{proposition}\label{prop:generalProjDiscError}
    Let $\Phi$ denote a time-stepping scheme. Let $Y^n\in\mathcal{V}_h^s$ be a sequence of approximations. Then, for all $n > 0$,
    \begin{equation*}
        \left|\left|y(t^{n + 1}) - \Phi_{\mathcal{K}_h}(Y^{n})\right|\right|\leq \left|\left|y(t^{n + 1}) - \mathrm{P}_{\mathcal{K}_h}(y(t^{n + 1})) \right|\right| + \left|\left|y(t^{n + 1}) - \Phi(Y^{n}) \right|\right|.
    \end{equation*}
\end{proposition}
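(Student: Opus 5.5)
The plan is a triangle inequality that passes through the projected exact solution $\mathrm{P}_{\mathcal{K}_h}(y(t^{n+1}))$, followed by the nonexpansivity of Theorem~\ref{thm:nonexpansivity}. This mirrors the proof of Proposition~\ref{prop:generalProjError}. The only difference is that we can no longer identify $y(t^{n+1})$ with its projection, and the gap between the two becomes the first term on the right-hand side. The projection $\mathrm{P}_{\mathcal{K}_h}$ is understood as a map defined on the ambient Hilbert space, with respect to its norm $\|\cdot\|$. Because $\mathcal{K}_h$ is a closed convex subset of a finite-dimensional space, it is closed and convex in that space, so Theorem~\ref{thm:nonexpansivity} applies to any pair of arguments, including $y(t^{n+1})$ and $\Phi(Y^n)$.

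First, I write $\Phi_{\mathcal{K}_h}(Y^n) = \mathrm{P}_{\mathcal{K}_h}(\Phi(Y^n))$ by definition. Then I insert and subtract $\mathrm{P}_{\mathcal{K}_h}(y(t^{n+1}))$:
\begin{equation*}
  \left\|y(t^{n+1}) - \Phi_{\mathcal{K}_h}(Y^n)\right\| \leq \left\|y(t^{n+1}) - \mathrm{P}_{\mathcal{K}_h}(y(t^{n+1}))\right\| + \left\|\mathrm{P}_{\mathcal{K}_h}(y(t^{n+1})) - \mathrm{P}_{\mathcal{K}_h}(\Phi(Y^n))\right\|.
\end{equation*}
Next, I apply Theorem~\ref{thm:nonexpansivity} with $u = y(t^{n+1})$ and $v = \Phi(Y^n)$. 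This bounds the second term by $\|y(t^{n+1}) - \Phi(Y^n)\|$, which gives the claimed estimate.

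I do not expect a genuine obstacle. The one point needing care is that $y(t^{n+1})$ generally lies outside $\mathcal{V}_h$, so the projection must be the one taken in the ambient Hilbert space and not one defined only on $\mathcal{V}_h$. I would state this convention explicitly in the proof. I would also remark that the first term is a pure spatial best-approximation error of the exact solution in $\mathcal{K}_h$, while the second is the error of the unconstrained scheme, so existing estimates for $\Phi$ again apply directly.
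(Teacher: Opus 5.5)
Your proposal is correct and matches the paper's proof: a triangle inequality through $\mathrm{P}_{\mathcal{K}_h}(y(t^{n+1}))$, the definition $\Phi_{\mathcal{K}_h} = \mathrm{P}_{\mathcal{K}_h}\circ\Phi$, and nonexpansivity (Theorem~\ref{thm:nonexpansivity}) applied to $y(t^{n+1})$ and $\Phi(Y^n)$. You also state explicitly that the projection is taken in the ambient Hilbert space, where $\mathcal{K}_h$ remains closed and convex; the paper leaves this convention implicit, and making it explicit is a worthwhile clarification.
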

\begin{proof}
    Suppose $\Phi$ is a time-stepping method. Then, by the triangle inequality, the definition of $\Phi_{\mathcal{K}_h}$, and Theorem~\ref{thm:nonexpansivity},
    \begin{align*}
        \left|\left|y(t^{n + 1}) - \Phi_{\mathcal{K}_h}(Y^{n})\right|\right| &\leq \left|\left|y(t^{n + 1}) - \mathrm{P}_{\mathcal{K}_h}(y(t^{n + 1}))\right|\right| + \left|\left|\mathrm{P}_{\mathcal{K}_h}(y(t^{n + 1})) - \Phi_{\mathcal{K}_h}(Y^{n})\right|\right|\\
                                                                                            &\leq \left|\left|y(t^{n + 1}) - \mathrm{P}_{\mathcal{K}_h}(y(t^{n + 1}))\right|\right|\\ &\qquad\qquad\qquad\qquad + \left|\left|\mathrm{P}_{\mathcal{K}_h}(y(t^{n + 1})) - \mathrm{P}_{\mathcal{K}_h}(\Phi(Y^{n}))\right|\right|\\
                                                                                            &\leq \left|\left|y(t^{n + 1}) - \mathrm{P}_{\mathcal{K}_h}(y(t^{n + 1}))\right|\right| + \left|\left|y(t^{n + 1}) - \Phi(Y^{n})\right|\right|.
    \end{align*}
\end{proof}


The nonexpansivity of projections can also be used to show that if $0\in \mathcal{K}_h$ and the unconstrained method $\Phi$ is stable, then the constrained method $\Phi_{\mathcal{K}_h}$ is stable in the same sense. 
A similar result holds for the monolithic backward Euler method. Further stability and error analysis is left as a topic of future study.

\section{Numerical Results}\label{sec:examples}

We have formulated several broad classes of methods, and we give numerical results for several important instances.
We use the RadauIIA and Gauss-Legendre families of implicit Runge-Kutta methods and the BDF family of multistep methods.  For a labeling scheme, we use the following: the spatial finite element will be denoted by $\mathcal{P}_k$, where 
$\mathcal{P}$ will take on either $\mathcal{L}$ in the event the Lagrange basis is used, or $\mathcal{B}$ for the Bernstein parameterization, and $k$ denotes the polynomial degree of the finite element space.
The time stepper will be 
denoted by $\mathrm{RIIA(s)}$, $\mathrm{GL(s)}$, or $\mathrm{BDF(s)}$, with $s$ denoting the number of stages or steps.
Finally, we append $\mathrm{P}$ to the scheme label to indicate the 
bounds-preserving projective modification and $\mathrm{VI}$ to indicate the monolithic scheme.
For example, we denote the method using quadratic Bernstein elements in space,
2-stage RadauIIA in time, and bounds-preserving projection by
$\mathcal{B}_2$-$\mathrm{RIIA}(2)$-$\mathrm{P}$.

All numerical experiments were completed using the Firedrake Project~\cite{Rathgeber:2016}, a Python package designed for the efficient implementation of finite element methods. 
The Irksome project~\cite{farrell2021irksome,kirby2024extending} sits on top of Firedrake to efficiently apply Runge--Kutta methods to finite element discretizations, 
and was used extensively throughout. Firedrake provides high-level access to PETSc~\cite{petsc-user-ref,petsc-efficient} through petsc4py~\cite{dalcin2011}. When using Lagrange elements, the degrees of freedom are chosen in accordance with~\cite{isaac2020recursive} rather than using equispaced nodes.

All monolithic variational inequalities were solved using the Newton-type reduced space VI solver {\ttfamily vinewtonrsls}, as implemented in PETSc.  Constrained $L^2$ projection 
we carried out using {\ttfamily vinewtonrsls}, the conjugate-gradient method for the resulting linear systems, and preconditioned with SOR.
The $L^2$ bounds and mass-preserving projection was computed using the augmented Lagrangian method {\ttfamily ALMM} as implemented in PETSc's Toolkit for Advanced Optimization~\cite{hestenes1969multiplier,powell1969method}. For all linear and nonlinear problems, we set the absolute tolerance to $10^{-8}$.
We describe the particular linear solvers used for each problem below.
All numerical examples are carried out on a single core of a laptop with 16GB of memory.  All timing results are averaged over five identical computations.

\subsection{The heat equation}
Here, we consider the heat equation.
Before studying the convergence rates and timing analysis of the proposed methods, we demonstrate possible failure modes in our methods
on a simple example.
We make two concrete distinctions in this first example: the difference between post-processing and monolithic methods, and that between the Lagrange basis and the Bernstein basis.

We first choose the initial condition, boundary condition, and forcing function so that the exact solution is 
\begin{equation}
  u = C(x, y) \left(\frac{1}{2} + \frac{1}{2}\textrm{tanh}(75t - 6)\right),
\end{equation}
where
\begin{equation}\label{eq:heat_C}
  C(x, y) = \left(\frac{1}{2} - \frac{1}{2}\textrm{tanh}\left(\frac{0.15 - \sqrt{\left(x - \frac{1}{2}\right)^2 + \left(y - \frac{1}{2}\right)^2}}{0.015}\right)\right).
\end{equation}
We utilize a uniform triangular mesh, $\mathcal{T}_h$, consisting of eight triangles. The computational mesh and associated degrees of freedom are shown in Figure~\ref{fig:heat_mesh}. We use continuous finite elements of degree $p=6$, and advance the system using the backward Euler method, constrained to enforce nonnegativity of the 
numerical solution using both a post-processing technique and a monolithic one. In both cases, the feasible set is given by $\mathcal{K}_h = \mathcal{J}_h^{\mathcal{B}_6, [0, \infty)}$. We take a single step of size $k = 1 / 4$.

\begin{figure}[ht]
  \centering
  \begin{subfigure}[t]{0.48\textwidth}
    \centering
    \includegraphics[width=\textwidth]{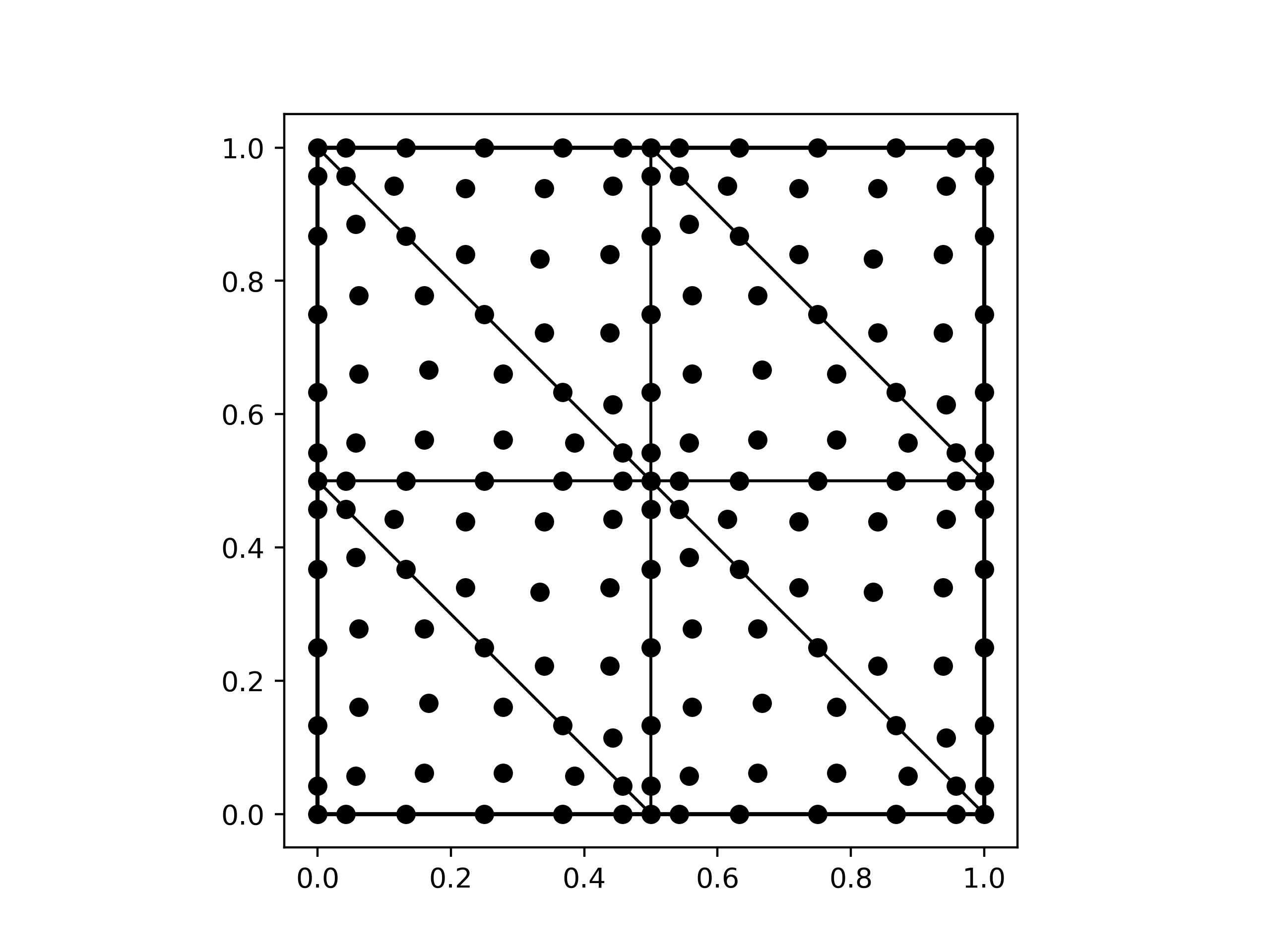}
  \end{subfigure}\hspace{0.04\textwidth}%
  \begin{subfigure}[t]{0.48\textwidth}
    \centering
    \includegraphics[width=\textwidth]{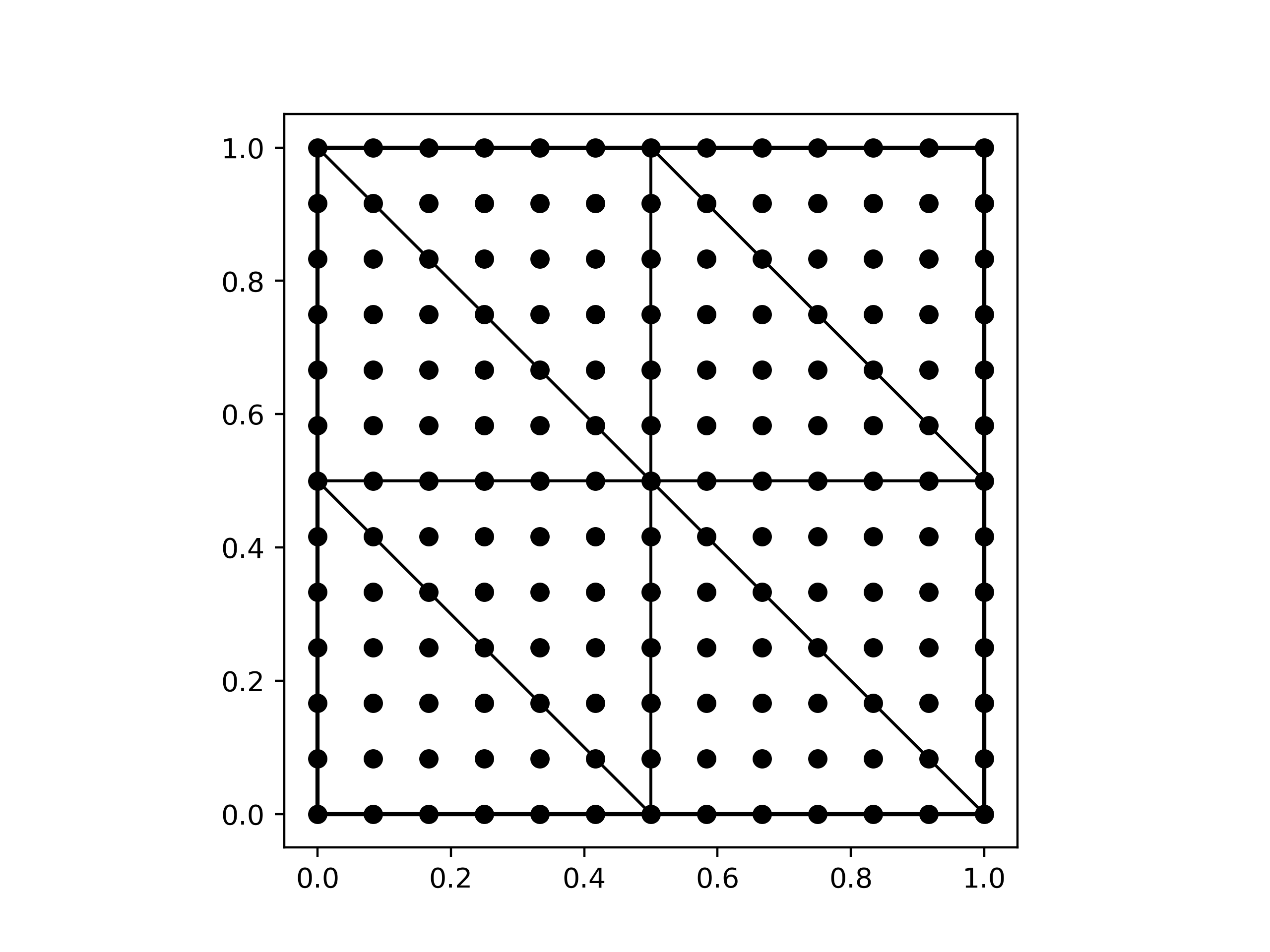}
  \end{subfigure}

  \caption{Computational mesh and the location of the degrees of freedom using the Lagrange basis (left) and the Bernstein basis (right).}
  \label{fig:heat_mesh}
\end{figure}

In Figure~\ref{fig:procedure_heat_post}, snapshots of the post-processing procedure are shown for $\mathcal{B}_6$-$\textrm{RIIA}(1)$-$\textrm{P}$.
Bounds violations are colored in red, and the 
plotted circles represent the degrees of freedom of the spatial function space. Before the projection, 
there are significant undershoots in the approximate solution, but the projection, removes these. In contrast, Figure~\ref{fig:procedure_heat_mono} shows snapshots of the approximation procedure when using the monolithic approach $\mathcal{B}_6$-$\textrm{RIIA}(1)$-$\textrm{VI}$. The initial conditions 
are identical, but the constrained approximation at time $t = 0.25$ is obtained without ever computing an intermediate approximation. The constraints are enforced during the solution of the discretized system, rather 
than by a bounds-preserving correction after the time step.

\begin{figure}[ht]
\vspace{1cm}
  \centering
  \begin{subfigure}[t]{0.3\textwidth}
    \centering
  \tikz[remember picture, baseline]{\node(1L){\includegraphics[width=0.9\textwidth]{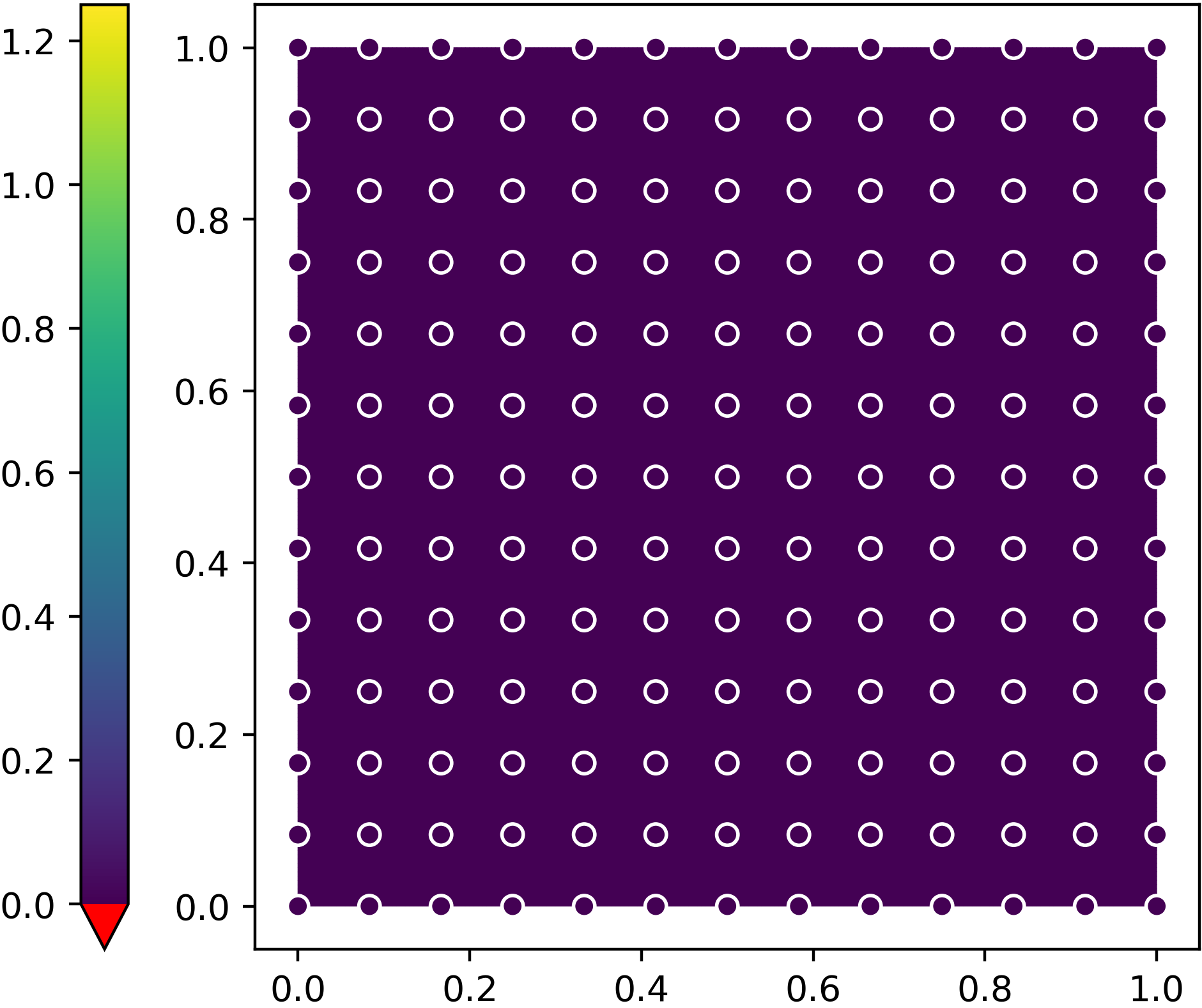}}}
  \subcaption{$t = 0.0$}
  \end{subfigure}\hspace{0.033\textwidth}%
  \begin{subfigure}[t]{0.3\textwidth}
    \centering
  \tikz[remember picture, baseline]{\node(1M){\includegraphics[width=0.8\textwidth]{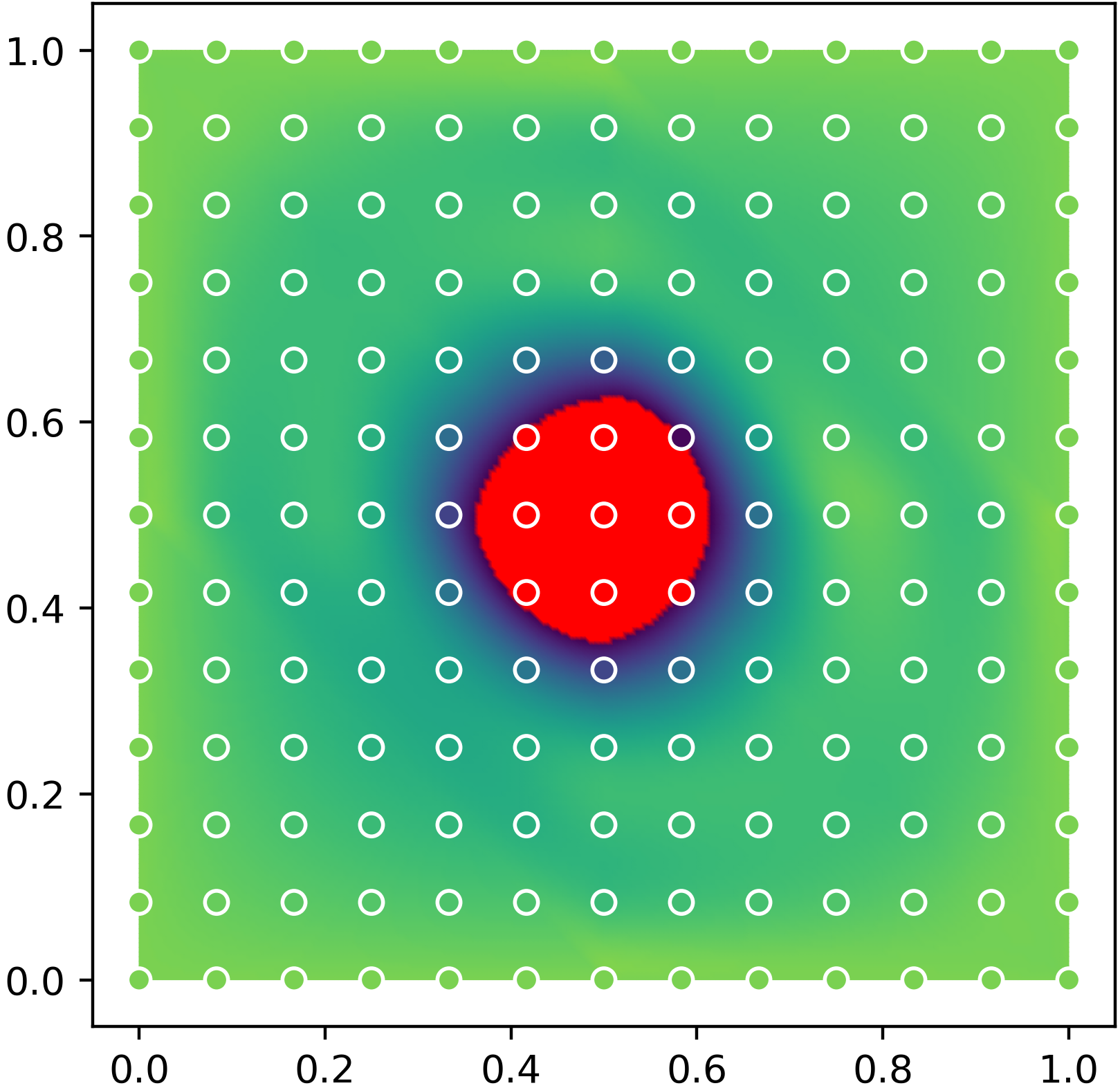}}}
  \subcaption{$t = 0.25$, pre-projection}
  \end{subfigure}\hspace{0.033\textwidth}%
  \begin{subfigure}[t]{0.3\textwidth}
    \centering
  \tikz[remember picture, baseline]{\node(1R){\includegraphics[width=0.8\textwidth]{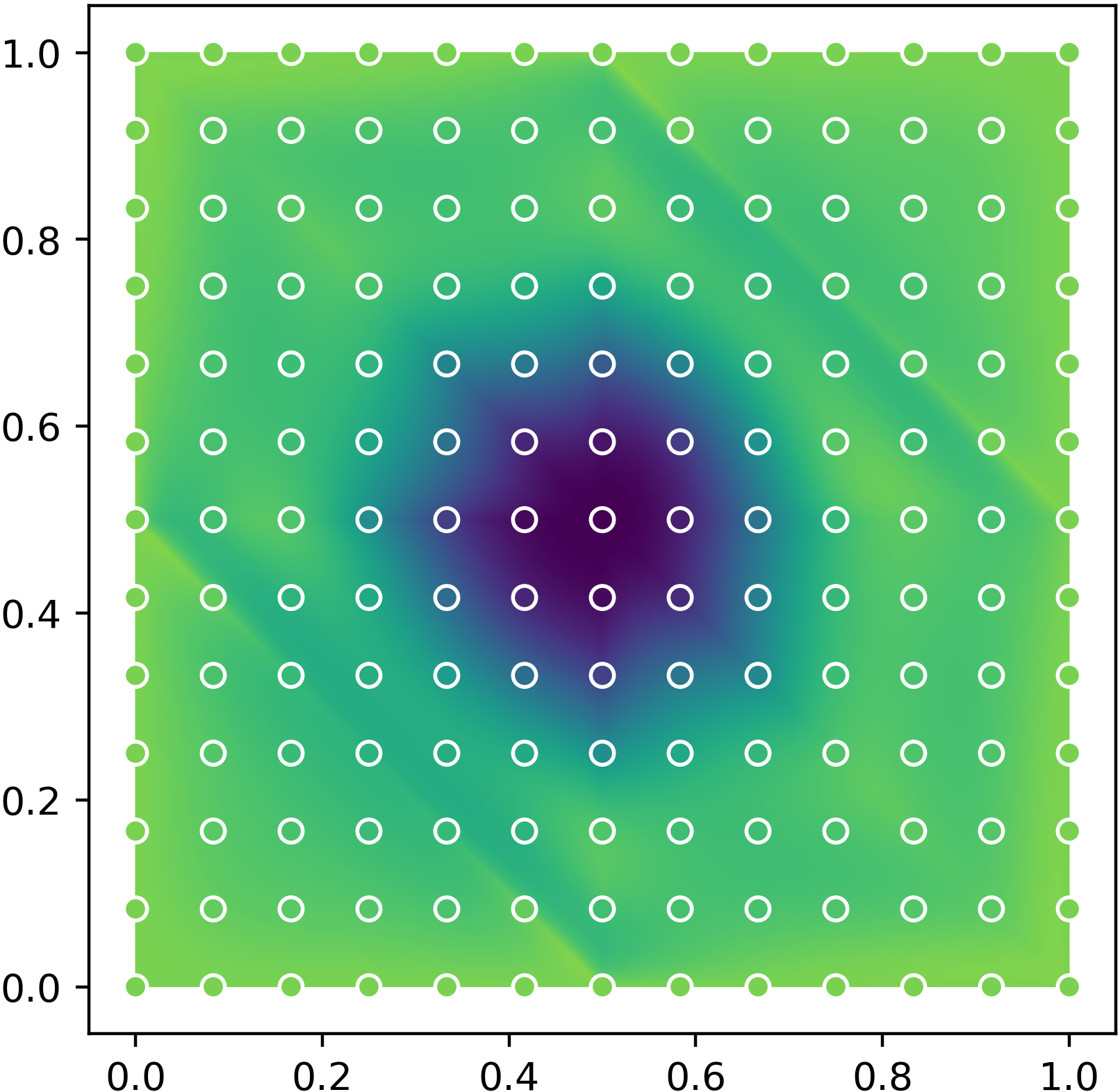}}}
  \subcaption{$t = 0.25$, projected}
  \end{subfigure}

  \caption{Snapshots of the post-processing procedure for integrating the heat equation using the $\mathcal{B}_6$-$\textrm{RIIA}(1)$-$\textrm{P}$ method.}
  \label{fig:procedure_heat_post}
  \tikz[overlay,remember picture]{
      \draw[-latex,very thick] (1L.north) .. controls +(up:1cm) and +(up:1cm) .. (1M.north)
          node[midway, above, align=center]{$\mathcal{B}_6$-$\textrm{RIIA}(1)$};
      \draw[-latex,very thick] (1M.north) .. controls +(up:1cm) and +(up:1cm) .. (1R.north)
          node[midway, above, align=center]{$\mathcal{B}_6$-$\textrm{P}_{L^2}$};
  }
\end{figure}
\begin{figure}[ht]
  \centering
  \begin{subfigure}[t]{0.3\textwidth}
    \centering
  \tikz[remember picture, baseline]{\node(2L){\includegraphics[width=0.9\textwidth]{B6RadauIIA1L2projTrue0.0.png}}}
  \subcaption{$t = 0.0$}
  \end{subfigure}\hspace{0.033\textwidth}%
  \hspace{0.3\textwidth}%
  \begin{subfigure}[t]{0.3\textwidth}
    \centering
  \tikz[remember picture, baseline]{\node(2R){\includegraphics[width=0.8\textwidth]{B6RadauIIA1L2projTrue_postproj_0.25.png}}}
  \subcaption{$t = 0.25$}
  \end{subfigure}

  \caption{Snapshots of the monolithic procedure for the integrating the heat equation using the $\mathcal{B}_6$-$\textrm{RIIA}(1)$-$\textrm{VI}$ method.}
  \label{fig:procedure_heat_mono}
  \tikz[overlay,remember picture]{
        \draw[-latex,very thick, shorten >=1cm, shorten <=1cm] (2L.east) -- (2R.west) 
          node[midway, above, align=center]{$\mathcal{B}_6$-$\textrm{RIIA}(1)$-$\textrm{VI}$};
  }
\end{figure}

If, instead, the spatial elements are parameterized using the Lagrange basis, $\mathcal{K}_h = \mathcal{J}_h^{\mathcal{L}_6, [0, \infty)}$, we observe slightly different behavior. In this case, the positivity is only enforced at the degrees of 
freedom, with no guarantee of what happens between them. Indeed, for both the initial condition and the approximations obtained at time $t=0.25$ by the post-processing method (Figure~\ref{fig:procedure_heat_post_lag}) 
and the monolithic method (Figure~\ref{fig:procedure_heat_mono_lag}) exhibit violations of the lower bounds between the spatial degrees of freedom. Notice, however, the value at the 
degrees of freedom remain positive as expected.

\begin{figure}[ht]
  \vspace{1cm}
  \centering
  \begin{subfigure}[t]{0.3\textwidth}
    \centering
  \tikz[remember picture, baseline]{\node(1L){\includegraphics[width=0.9\textwidth]{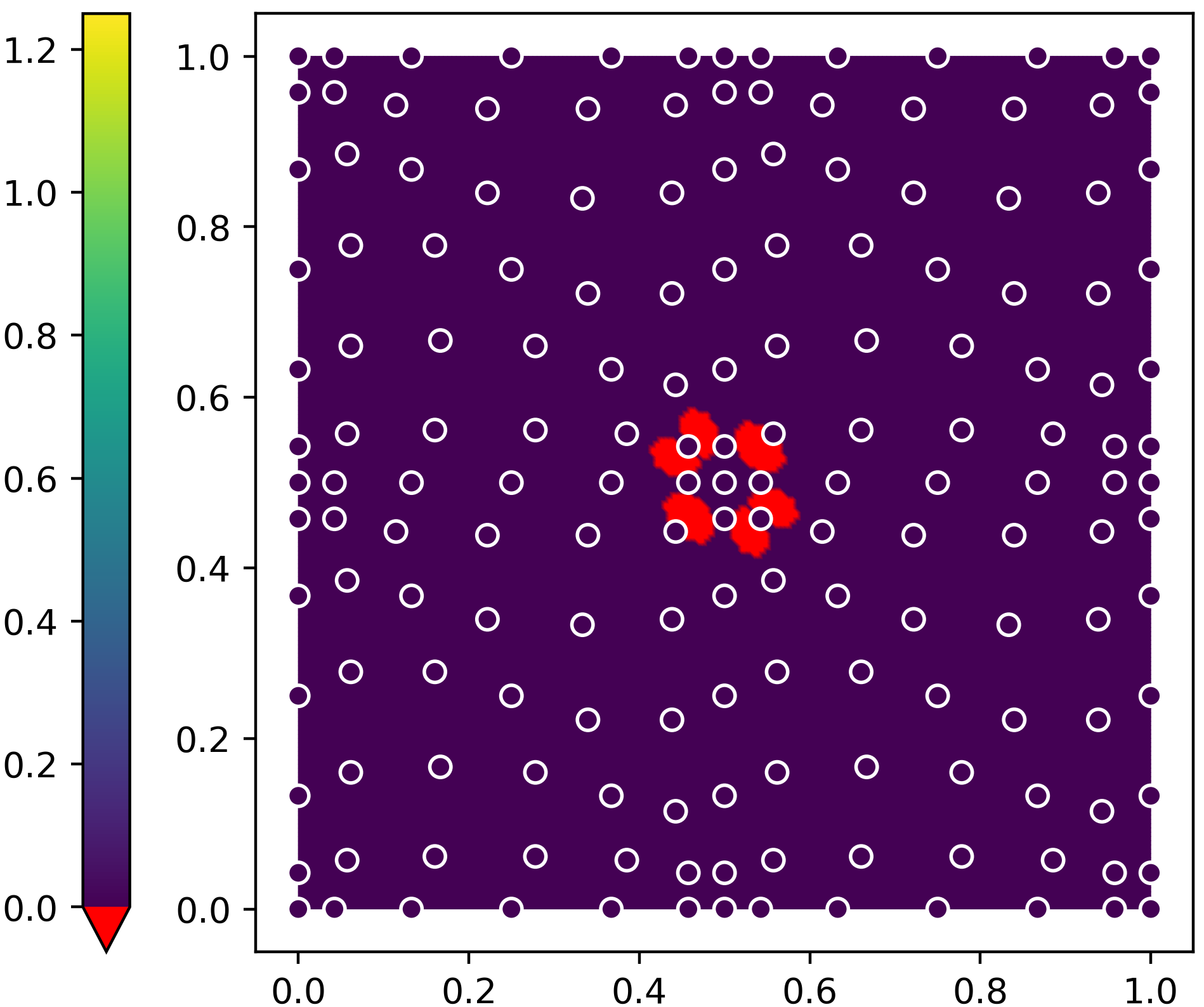}}}
  \subcaption{$t = 0.0$}
  \end{subfigure}\hspace{0.033\textwidth}%
  \begin{subfigure}[t]{0.3\textwidth}
    \centering
  \tikz[remember picture, baseline]{\node(1M){\includegraphics[width=0.8\textwidth]{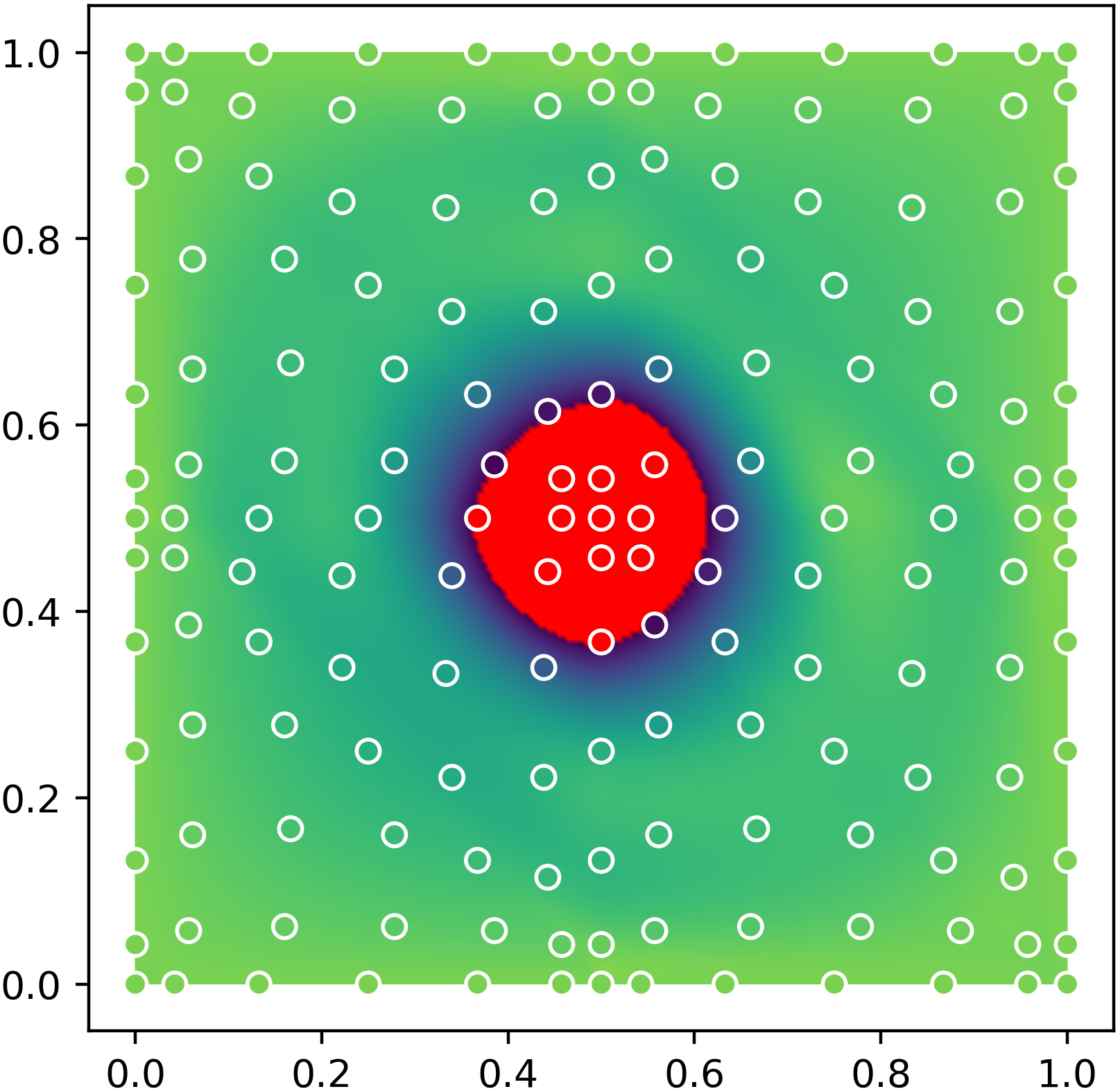}}}
  \subcaption{$t = 0.25$, pre-projection}
  \end{subfigure}\hspace{0.033\textwidth}%
  \begin{subfigure}[t]{0.3\textwidth}
    \centering
  \tikz[remember picture, baseline]{\node(1R){\includegraphics[width=0.8\textwidth]{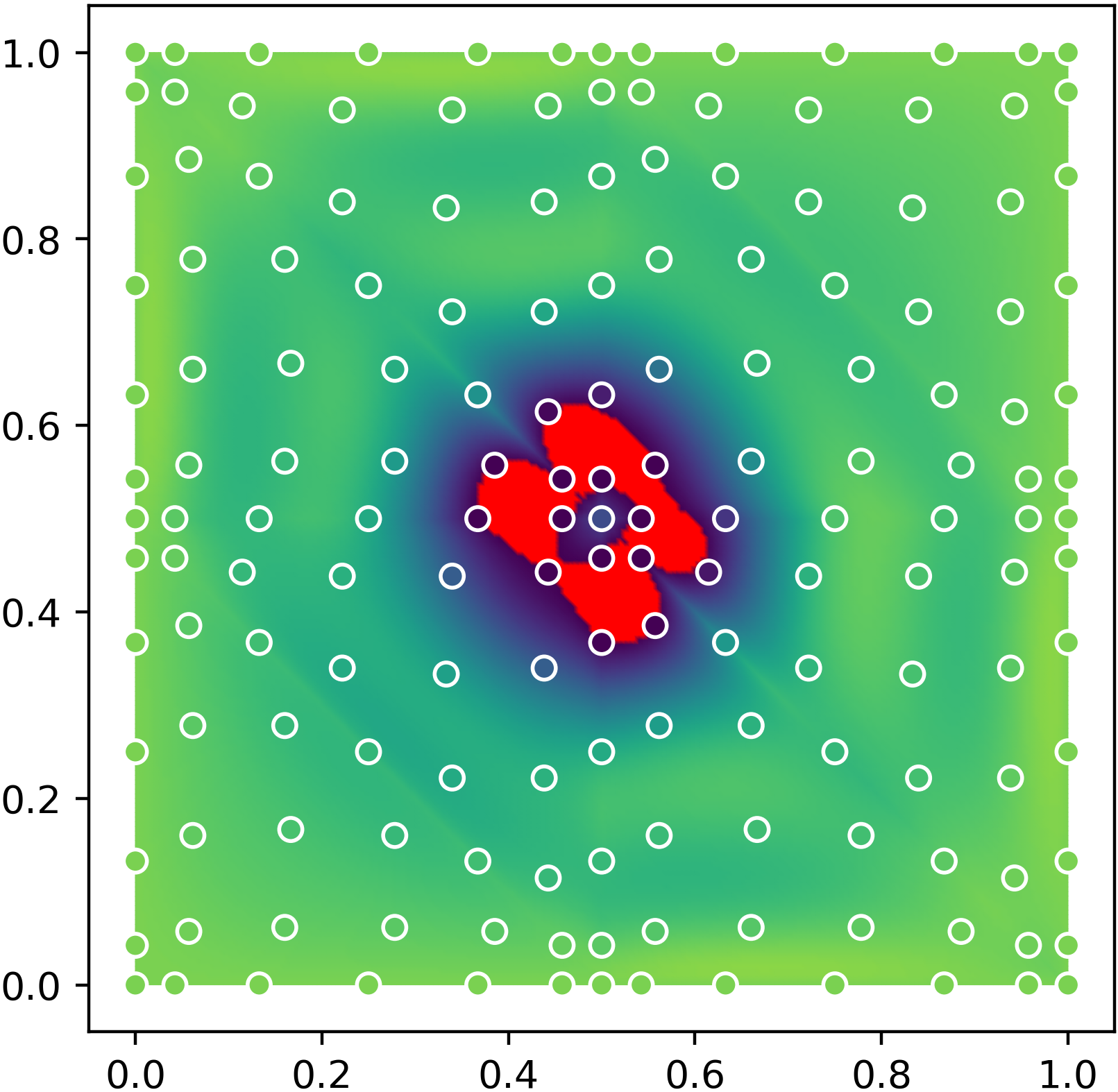}}}
  \subcaption{$t = 0.25$, projected}
  \end{subfigure}

  \caption{Snapshots of the post-processing procedure for integrating the heat equation using the $\mathcal{L}_6$-$\textrm{RIIA}(1)$-$\textrm{P}$ method.}
  \label{fig:procedure_heat_post_lag}
  \tikz[overlay,remember picture]{
      \draw[-latex,very thick] (1L.north) .. controls +(up:1cm) and +(up:1cm) .. (1M.north)
          node[midway, above, align=center]{$\mathcal{L}_6$-$\textrm{RIIA}(1)$};
      \draw[-latex,very thick] (1M.north) .. controls +(up:1cm) and +(up:1cm) .. (1R.north)
          node[midway, above, align=center]{$\mathcal{L}_6$-$\textrm{P}_{L^2}$};
  }
\end{figure}
\begin{figure}[ht]
  \centering
  \begin{subfigure}[t]{0.3\textwidth}
    \centering
  \tikz[remember picture, baseline]{\node(2L){\includegraphics[width=0.9\textwidth]{L6RadauIIA1L2projTrue0.0.png}}}
  \subcaption{$t = 0.0$}
  \end{subfigure}\hspace{0.033\textwidth}%
  \hspace{0.3\textwidth}%
  \begin{subfigure}[t]{0.3\textwidth}
    \centering
  \tikz[remember picture, baseline]{\node(2R){\includegraphics[width=0.8\textwidth]{L6RadauIIA1L2projTrue_postproj_0.25.png}}}
  \subcaption{$t = 0.25$}
  \end{subfigure}

  \caption{Snapshots of the monolithic procedure for the integrating the heat equation using the $\mathcal{L}_6$-$\textrm{RIIA}(1)$-$\textrm{VI}$ method.}
  \label{fig:procedure_heat_mono_lag}
  \tikz[overlay,remember picture]{
        \draw[-latex,very thick, shorten >=1cm, shorten <=1cm] (2L.east) -- (2R.west) 
          node[midway, above, align=center]{$\mathcal{L}_6$-$\textrm{RIIA}(1)$-$\textrm{VI}$};
  }

\end{figure}

\subsubsection{Convergence Rates}

Let $\Omega = [0, 1]\times [0, 1]$. We choose $f$ and $g$ in \eqref{eq:heat_general} such that the exact solution is given by
\begin{equation}\label{eq:heat_conv_exact}
u = e^{-t} \cos(2\pi x)^2\sin(2\pi y)^2.
\end{equation}

We discretize the system with Bernstein and Lagrange finite elements of varying degree, and integrate the system to time $T = 1.0$ with our various schemes.
The mesh consists of $2\times N^2$ uniform triangular elements, and we take the time step $k = 1 / N$.
We perform the same computation with varying values of $N$ to examine the convergence behavior. The feasible sets are taken as in the previous section, with the indicated spatial basis and polynomial degree.
Results for the projective RadauIIA family are shown in Figure~\ref{fig:heat_RIIA}.
The monolithic modifications for the backward Euler method and implicit midpoint method are shown in Figure~\ref{fig:heat_monoVI}. 
The projective and monolithic modifications of the BDF family are shown in Figure~\ref{fig:heat_BDF}. We performed the same convergence tests for the projective Gauss-Legendre method 
as well and observed optimal order convergence in all cases. For the $3$-stage RadauIIA method, the absolute solver tolerance is dropped to $10^{-12}$ on the finest mesh.

Since the feasible set for the Bernstein basis contains strictly fewer functions than that for Lagrange, we expect it to have a worse best approximation but hope for the same order of accuracy. 
We do not report on tests with linear finite elements because the constrained Lagrange and Bernstein bases coincide exactly in this case. To test first and second order integration schemes, 
we instead use quadratic finite elements. For the monolithic Runge-Kutta formulations, $\mathrm{RIIA}(1)$-$\mathrm{VI}$ and $\mathrm{GL}(1)$-$\mathrm{VI}$, in the regime tested, we observe faster convergence than expected, with the error decaying at or near the optimal rate of the spatial discretization.
We expect this is because the total error is dominated by spatial rather than temporal discretization and that the rate will decay to first or second order under additional refinement.

We start the bounds-preserving $\textrm{BDF}(s)$ methods with $2^s$ steps ($s$ representing the order of the BDF method) of size $k/2^s$ of the monolithically bounds-constrained backward Euler method.
We again see optimal order convergence for all of the methods tested.

\pgfplotstableread[col sep=comma]{heat_conv_easy.csv}\loadedtable
\pgfplotstableread[col sep=comma]{heat_conv_tight.csv}\tightconvtable

\begin{figure}[ht]
  \centering
  \begin{subfigure}[t]{0.48\textwidth}
  \centering
  \begin{tikzpicture}[baseline={(0,0)}]
  \begin{axis}[width=\textwidth, legend style={at={(0.5,-0.3)},anchor=north,nodes={scale=0.6, transform shape}, legend columns=2}, 
        title={$\mathcal{P}_2$-$\mathrm{RIIA}(2)$-$\mathrm{P}$},
    xmode=log, 
    ymode=log,
    ylabel near ticks,
    ylabel shift = {-4pt},
    xtick={2, 4,8,16,32, 64}, 
    xticklabels={$2^1$, $2^2$, $2^3$, $2^4$, $2^5$, $2^6$},
    xlabel=N,
        ylabel={error}
        ]

      \addplot[blue,mark=diamond*, mark size=3pt,  thick] table[x=Nspat, y=Lagrange2RadauIIA2L2projTrueL2err, col sep=comma]\loadedtable;
      \addlegendentry[thick] {$\mathcal{L}_2$-$\mathrm{RIIA}(2)$, $L^2$ err}

      \addplot[blue,mark=diamond*, mark size=3pt,  thick, dashed, mark options={solid}] table[x=Nspat, y=Lagrange2RadauIIA2L2projTrueH1err, col sep=comma] \loadedtable;
      \addlegendentry {$\mathcal{L}_2$-$\mathrm{RIIA}(2)$, $H^1$ err}
      

      \addplot[brown,mark=0, mark size=3pt,  thick] table[x=Nspat, y=Bernstein2RadauIIA2L2projTrueL2err, col sep=comma] \loadedtable;
      \addlegendentry[thick] {$\mathcal{B}_2$-$\mathrm{RIIA}(2)$, $L^2$ err}
      
      \addplot[brown,mark=0, mark size=3pt,  thick, dashed, mark options={solid}] table[x=Nspat, y=Bernstein2RadauIIA2L2projTrueH1err, col sep=comma] \loadedtable;
      \addlegendentry{$\mathcal{B}_2$-$\mathrm{RIIA}(2)$, $H^1$ err}


      \addplot[gray, dotted, thick, domain=8:64] {35/x^2} node[pos=0.5, anchor=south west] {$\mathcal{O}(N^{-2})$};
      \addplot[gray, dotted, thick, domain=8:64] {0.5/x^3} node[pos=0.5, anchor=north east] {$\mathcal{O}(N^{-3})$};

  \end{axis}

  \end{tikzpicture}
  \end{subfigure}\hspace{0.04\textwidth}%
  \begin{subfigure}[t]{0.48\textwidth}
  \centering
  \begin{tikzpicture}[baseline={(0,0)}]
  \begin{axis}[width=\textwidth, legend style={at={(0.5,-0.3)},anchor=north,nodes={scale=0.6, transform shape}, legend columns=2},
          title={$\mathcal{P}_4$-$\mathrm{RIIA}(3)$-$\mathrm{P}$},
    xmode=log, 
    ymode=log,
    ylabel near ticks,
    ylabel shift = {-6pt},
    xtick={2, 4,8,16,32, 64, 128}, 
    xticklabels={$2^1$, $2^2$, $2^3$, $2^4$, $2^5$, $2^6$, $2^7$},
    xlabel=N,
        ylabel={error}]

      \addplot[blue,mark=diamond*, mark size=3pt,  thick] table[x=Nspat, y=Lagrange4RadauIIA3L2projTrueL2err, col sep=comma] \tightconvtable;
      \addlegendentry[thick] {$\mathcal{L}_4$-$\mathrm{RIIA}(3)$, $L^2$ err}
      
      \addplot[blue,mark=diamond*, mark size=3pt,  thick, dashed, mark options={solid}] table[x=Nspat, y=Lagrange4RadauIIA3L2projTrueH1err, col sep=comma] \tightconvtable;
      \addlegendentry[thick] {$\mathcal{L}_4$-$\mathrm{RIIA}(3)$, $H^1$ err}
      

      \addplot[brown,mark=o, mark size=3pt,  thick] table[x=Nspat, y=Bernstein4RadauIIA3L2projTrueL2err, col sep=comma] \tightconvtable;
      \addlegendentry[thick] {$\mathcal{B}_4$-$\mathrm{RIIA}(3)$, $L^2$ err}
          
      \addplot[brown,mark=o, mark size=3pt,  thick, dashed, mark options={solid}] table[x=Nspat, y=Bernstein4RadauIIA3L2projTrueH1err, col sep=comma] \tightconvtable;
      \addlegendentry[thick] {$\mathcal{B}_4$-$\mathrm{RIIA}(3)$, $H^1$ err}
            

      \addplot[gray, dotted, thick, domain=8:64] {200/x^4} node[pos=0.4, anchor=south west] {$\mathcal{O}(N^{-4})$};
      \addplot[gray, dotted, thick, domain=8:64] {0.1/x^5} node[pos=0.5, anchor=north east] {$\mathcal{O}(N^{-5})$};

  \end{axis}
  \end{tikzpicture}
  \end{subfigure}
\caption{Convergence plots for projective methods. $L^2$ (solid) and $H^1$ (dotted) errors in the approximation of $u(1.0)$ using a uniform $N\times N$ mesh and $k = 1/N$.} 
\label{fig:heat_RIIA}
\end{figure}


\begin{figure}[ht]
  \centering
  \begin{subfigure}[t]{0.48\textwidth}
    \centering
  \begin{tikzpicture}
  \begin{axis}[width=\textwidth, legend style={at={(0.5,-0.3)},anchor=north,nodes={scale=0.6, transform shape}, legend columns=2},
          title={$\mathcal{P}_2$-$\mathrm{RIIA}(1)$-$\mathrm{VI}$},
    xmode=log, 
    ymode=log,
    ylabel near ticks,
    ylabel shift = {-4pt},
    xtick={2, 4,8,16,32, 64, 128}, 
    xticklabels={$2^1$, $2^2$, $2^3$, $2^4$, $2^5$, $2^6$, $2^7$},
    xlabel=N,
        ylabel={error}
        ]
      \addplot[blue,mark=diamond*, mark size=3pt,  thick] table[x=Nspat, y=Lagrange2RadauIIA1monoTrueL2err, col sep=comma] \loadedtable;
      \addlegendentry[thick] {$\mathcal{L}_2$-$\mathrm{RIIA}(1)$, $L^2$ err}

      \addplot[blue,mark=diamond*, mark size=3pt,  thick, dashed, mark options={solid}] table[x=Nspat, y=Lagrange2RadauIIA1monoTrueH1err, col sep=comma] \loadedtable;
      \addlegendentry[thick] {$\mathcal{L}_2$-$\mathrm{RIIA}(1)$, $H^1$ err}
      

      \addplot[brown,mark=o, mark size=3pt, thick] table[x=Nspat, y=Bernstein2RadauIIA1monoTrueL2err, col sep=comma] \loadedtable;
      \addlegendentry[thick] {$\mathcal{B}_2$-$\mathrm{RIIA}(1)$, $L^2$ err}

      \addplot[brown,mark=o, mark size=3pt,  thick, dashed,mark options={solid}] table[x=Nspat, y=Bernstein2RadauIIA1monoTrueH1err, col sep=comma]\loadedtable;
      \addlegendentry[thick] {$\mathcal{B}_2$-$\mathrm{RIIA}(1)$, $H^1$ err}

      \addplot[gray, dotted, thick, domain=8:64] {50/x^2} node[pos=0.5, anchor=south west] {$\mathcal{O}(N^{-2})$};
      \addplot[gray, dotted, thick, domain=8:64] {0.6/x^3} node[pos=0.5, anchor=north east] {$\mathcal{O}(N^{-3})$};

  \end{axis}

  \end{tikzpicture}
  \end{subfigure}\hspace{0.04\textwidth}%
  \begin{subfigure}[t]{0.48\textwidth}
    \centering
  \begin{tikzpicture}
  \centering
  \begin{axis}[width=\textwidth, legend style={at={(0.5,-0.3)},anchor=north,nodes={scale=0.6, transform shape}, legend columns=2},
          title={$\mathcal{P}_2$-$\mathrm{GL}(1)$-$\mathrm{VI}$},
    xmode=log, 
    ymode=log,
    ylabel near ticks,
    ylabel shift = {-4pt},
    xtick={2, 4,8,16,32, 64, 128}, 
    xticklabels={$2^1$, $2^2$, $2^3$, $2^4$, $2^5$, $2^6$, $2^7$},
    xlabel=N,
        ylabel={error}]
      \addplot[blue,mark=diamond*, mark size=3pt,  thick] table[x=Nspat, y=Lagrange2GaussLegendre1monoTrueL2err, col sep=comma] \loadedtable;
      \addlegendentry[thick] {$\mathcal{L}_2$-$\mathrm{GL}(1)$, $L^2$ err}
      
      \addplot[blue,mark=diamond*, mark size=3pt,  thick, dashed, mark options={solid}] table[x=Nspat, y=Lagrange2GaussLegendre1monoTrueH1err, col sep=comma] \loadedtable;
      \addlegendentry[thick] {$\mathcal{L}_2$-$\mathrm{GL}(1)$, $H^1$ err}

      \addplot[brown,mark=o, mark size=3pt, thick] table[x=Nspat, y=Bernstein2GaussLegendre1monoTrueL2err, col sep=comma] \loadedtable;
      \addlegendentry[thick] {$\mathcal{B}_2$-$\mathrm{GL}(1)$, $L^2$ err}

      \addplot[brown,mark=o, mark size=3pt,  thick, dashed,mark options={solid}] table[x=Nspat, y=Bernstein2GaussLegendre1monoTrueH1err, col sep=comma]\loadedtable;
      \addlegendentry[thick] {$\mathcal{B}_2$-$\mathrm{GL}(1)$, $H^1$ err}

      \addplot[gray, dotted, thick, domain=8:64] {50/x^2} node[pos=0.5, anchor=south west] {$\mathcal{O}(N^{-2})$};
      \addplot[gray, dotted, thick, domain=8:64] {0.6/x^3} node[pos=0.5, anchor=north east] {$\mathcal{O}(N^{-3})$};

  \end{axis}
  \end{tikzpicture}
  \end{subfigure}
\caption{Convergence plots for monolithic methods. $L^2$ (solid) and $H^1$ (dotted) error in the approximation of $u(1.0)$ using a uniform $N\times N$ mesh and $k = 1/N$.} 
\label{fig:heat_monoVI}
\end{figure}


\pgfplotstableread[col sep=comma]{heat_conv_BDF_easy.csv}\bdftable

\begin{figure}[ht]
  \centering
  \begin{subfigure}[t]{0.48\textwidth}
    \centering
  \begin{tikzpicture}
  \begin{axis}[width=\textwidth, legend style={at={(0.5,-0.3)},anchor=north,nodes={scale=0.6, transform shape}, legend columns=2},
      title={$\mathcal{L}_{s-1}$-$\mathrm{BDF}(s)$-$\mathrm{P}$},
    xmode=log, 
    ymode=log,
    ylabel near ticks,
    ylabel shift = {-4pt},
    xtick={2, 4,8,16,32, 64, 128}, 
    xticklabels={$2^1$, $2^2$, $2^3$, $2^4$, $2^5$, $2^6$, $2^7$},
    xlabel=N,
        ylabel={error}]
    \addplot[blue,mark=diamond*, mark size=3pt, thick] table[x=Nspat, y=Lagrange1BDF2L2projTrueL2err, col sep=comma] \bdftable;
    \addlegendentry[thick] {$\mathcal{L}_1$-$\mathrm{BDF}(2)$, $L^2$ err}
    \addplot[blue,mark=diamond*, mark size=3pt,  thick, dashed,mark options={solid}] table[x=Nspat, y=Lagrange1BDF2L2projTrueH1err, col sep=comma]\bdftable;
    \addlegendentry[thick] {$\mathcal{L}_1$-$\mathrm{BDF}(2)$, $H^1$ err}

    \addplot[brown,mark=o, mark size=3pt, thick] table[x=Nspat, y=Lagrange2BDF3L2projTrueL2err, col sep=comma] \bdftable;
    \addlegendentry[thick] {$\mathcal{L}_2$-$\mathrm{BDF}(3)$, $L^2$ err}
    \addplot[brown,mark=o, mark size=3pt,  thick, dashed,mark options={solid}] table[x=Nspat, y=Lagrange2BDF3L2projTrueH1err, col sep=comma]\bdftable;
    \addlegendentry[thick] {$\mathcal{L}_2$-$\mathrm{BDF}(3)$, $H^1$ err}
    \addplot[darkgray,mark=triangle*, mark size=3pt,  thick] table[x=Nspat, y=Lagrange3BDF4L2projTrueL2err, col sep=comma] \bdftable;
    \addlegendentry[thick] {$\mathcal{L}_3$-$\mathrm{BDF}(4)$, $L^2$ err}
     \addplot[darkgray,mark=triangle*, mark size=3pt,  thick, dashed, mark options={solid}] table[x=Nspat, y=Lagrange3BDF4L2projTrueH1err, col sep=comma] \bdftable;
    \addlegendentry[thick] {$\mathcal{L}_3$-$\mathrm{BDF}(4)$, $H^1$ err}



    \addplot[gray, dotted, thick, domain=8:64] {20/x^1} node[pos=0.5, anchor=south west, yshift={-6pt}] {$\mathcal{O}(N^{-1})$};
    \addplot[gray, dotted, thick, domain=8:64] {5/x^2};
    \addplot[gray, dotted, thick, domain=8:64] {1/x^3};
    \addplot[gray, dotted, thick, domain=8:64] {0.6/x^4} node[pos=0.5, anchor=north east, yshift={3pt}] {$\mathcal{O}(N^{-4})$};

  \end{axis}
  \end{tikzpicture}
  \end{subfigure}\hspace{0.04\textwidth}%
  \begin{subfigure}[t]{0.48\textwidth}
    \centering
  \begin{tikzpicture}
  \centering
  \begin{axis}[width=\textwidth, legend style={at={(0.5,-0.3)},anchor=north,nodes={scale=0.6, transform shape}, legend columns=2},
            title={$\mathcal{L}_{s - 1}$-$\mathrm{BDF}(s)$-$\mathrm{VI}$},
    xmode=log, 
    ymode=log,
    ylabel near ticks,
    ylabel shift = {-4pt},
    xtick={2, 4,8,16,32, 64, 128}, 
    xticklabels={$2^1$, $2^2$, $2^3$, $2^4$, $2^5$, $2^6$, $2^7$},
    xlabel=N,
        ylabel={error}]

      \addplot[blue,mark=diamond*, mark size=3pt, thick] table[x=Nspat, y=Lagrange1BDF2monoTrueL2err, col sep=comma] \bdftable;
      \addlegendentry[thick] {$\mathcal{L}_1$-$\mathrm{BDF}(2)$, $L^2$ err}
      \addplot[blue,mark=diamond*, mark size=3pt,  thick, dashed,mark options={solid}] table[x=Nspat, y=Lagrange1BDF2monoTrueH1err, col sep=comma]\bdftable;
      \addlegendentry[thick] {$\mathcal{L}_1$-$\mathrm{BDF}(2)$, $H^1$ err}

      \addplot[brown,mark=o, mark size=3pt, thick] table[x=Nspat, y=Lagrange2BDF3monoTrueL2err, col sep=comma] \bdftable;
      \addlegendentry[thick] {$\mathcal{L}_2$-$\mathrm{BDF}(3)$, $L^2$ err}
      \addplot[brown,mark=o, mark size=3pt,  thick, dashed,mark options={solid}] table[x=Nspat, y=Lagrange2BDF3monoTrueH1err, col sep=comma]\bdftable;
      \addlegendentry[thick] {$\mathcal{L}_2$-$\mathrm{BDF}(3)$, $H^1$ err}

      \addplot[darkgray,mark=triangle*, mark size=3pt,  thick] table[x=Nspat, y=Lagrange3BDF4monoTrueL2err, col sep=comma] \bdftable;
      \addlegendentry[thick] {$\mathcal{L}_3$-$\mathrm{BDF}(4)$, $L^2$ err}
      \addplot[darkgray,mark=triangle*, mark size=3pt,  thick, dashed, mark options={solid}] table[x=Nspat, y=Lagrange3BDF4monoTrueH1err, col sep=comma] \bdftable;
      \addlegendentry[thick] {$\mathcal{L}_3$-$\mathrm{BDF}(4)$, $H^1$ err}


      \addplot[gray, dotted, thick, domain=8:64] {20/x^1} node[pos=0.5, anchor=south west, yshift={-6pt}] {$\mathcal{O}(N^{-1})$};
      \addplot[gray, dotted, thick, domain=8:64] {5/x^2};
      \addplot[gray, dotted, thick, domain=8:64] {1/x^3};
      \addplot[gray, dotted, thick, domain=8:64] {0.6/x^4} node[pos=0.5, anchor=north east, yshift={3pt}] {$\mathcal{O}(N^{-4})$};

  \end{axis}
  \end{tikzpicture}
  \end{subfigure}%

\caption{Convergence plots for projective methods (left) and monolithic methods (right). $L^2$ (solid) and $H^1$ (dotted) error in the approximation of $u(1.0)$ using a uniform $N\times N$ mesh and $k = 1/N$.} 
\label{fig:heat_BDF}
\end{figure}

\subsubsection{Timing Results}\pgfplotstableread[col sep=comma]{heat_timing_rana_new.csv}\heattiming

To investigate how the time required for the post-processing procedure scales with the polynomial degree and number of internal stages, we fix a triangular mesh consisting of $32,768$ uniform elements parameterized using the Bernstein basis. 
We record the time it takes to take one step of size $1 / 128$ with constrained $L^2$ post-processing. The results are shown in Figure~\ref{fig:heat_timing_Bernstein}.

We solve the linear system arising in RadauIIA discretization with
GMRES~\cite{saad1986gmres} and a relative tolerance of $10^{-8}$.
We precondition this system using the stage-segregated preconditioner of~\cite{masud2021new}, approximating the inverses of the diagonal blocks with one sweep of PETSc's geometric algebraic multigrid.
Then, we perform the constrained $L^2$ projection with the reduced space Newton scheme as described above.
As expected, we observe that the time required for the projection is
independent of the order of the underlying time-stepping scheme, depending primarily on the 
polynomial degree.

As the underlying problem becomes more challenging, 
the solution to the stage-coupled system will require additional time, while the constrained post-processing is independent of the underlying problem.
We repeated these tests with the polynomials parameterized using the Lagrange basis and observed no meaningful differences from the Bernstein case.

\begin{figure}[ht]
  \centering
  \begin{tikzpicture}
  \begin{axis}[width=0.6\textwidth, height=0.4\textwidth, legend style={at={(1.05, 0.5)}, anchor=west,nodes={scale=0.8, transform shape}, legend columns=1},
          title={$\mathcal{B}_p$-$\mathrm{RIIA}(s)$-$\mathrm{P}$},
    ymode=log,
    ylabel near ticks,
    xlabel near ticks,
    xtick={1, 2, 3, 4}, 
    xlabel=$s$,
    ymin=0.025,
    ymax=150,
    ylabel={time (s)}
        ]

      \addplot[blue,mark=diamond*, mark size=3pt,  thick] table[x=s, y=B1RadauIIAL2projTrueSolverTime, col sep=comma] \heattiming;
      \addlegendentry[thick] {$p = 1$}
      
      \addplot[brown,mark=*, mark size=3pt, thick] table[x=s, y=B2RadauIIAL2projTrueSolverTime, col sep=comma] \heattiming;
      \addlegendentry[thick] {$p = 2$}
      
      \addplot[darkgray,mark=triangle*, mark size=3pt, thick] table[x=s, y=B3RadauIIAL2projTrueSolverTime, col sep=comma] \heattiming;
      \addlegendentry[thick] {$p = 3$}

      \addplot[red,mark=square*, mark size=3pt,  thick] table[x=s, y=B4RadauIIAL2projTrueSolverTime, col sep=comma] \heattiming;
      \addlegendentry[thick] {$p = 4$}

      
      \addplot[blue,mark=diamond, mark size=3pt,  thick, dashed, mark options={solid}, forget plot] table[x=s, y=B1RadauIIAL2projTrueProjectorTime, col sep=comma] \heattiming;
      \addplot[brown,mark=o, mark size=3pt,  thick, dashed,mark options={solid}, forget plot] table[x=s, y=B2RadauIIAL2projTrueProjectorTime, col sep=comma]\heattiming;
      \addplot[darkgray,mark=triangle, mark size=3pt,  thick, dashed,mark options={solid}, forget plot] table[x=s, y=B3RadauIIAL2projTrueProjectorTime, col sep=comma]\heattiming;
      \addplot[red,mark=square, mark size=3pt,  thick, dashed,mark options={solid}, forget plot] table[x=s, y=B4RadauIIAL2projTrueProjectorTime, col sep=comma] \heattiming;

      

  \end{axis}
  \end{tikzpicture}

\caption{Time required to solve the stage-coupled system (solid) and perform an $L^2$ post-processing (dashed) for taking one step of size $k = 1 / 128$ for the heat equation on a $32,768$ element triangular mesh with the elements parameterized in the Bernstein basis.  We use degree $p$ spatial finite elements, and the $s$-stage RadauIIA Runge-Kutta method.} 
\label{fig:heat_timing_Bernstein}
\end{figure}

\subsection{The Advection equation}
\label{sec:advec}
We now turn our attention to the advection problem~\eqref{eq:advec_eq}-\eqref{eq:advec_ic}. 
We take $\Omega = [0, 1]\times [0, 1]$.  For the flow field, we use the swirling deformation flow test case of~\cite{leveque1996high},
in which
\begin{equation}\label{eq:advec_weak}
        \mathbf{v} = g(t)\begin{bmatrix} \sin^2(\pi x)\sin(2\pi y)\\ -\sin^2(\pi y)\sin(2\pi x)\end{bmatrix},
\end{equation}
with $g(t) = \cos(\pi t / T)$. The flow reverses itself 
at time $t = T/2$, so the initial condition is recovered exactly at time $T$, providing an easy way to check the quality of a numerical solution. We take the final time to be $T = 1.5$.

We test our methods with the cosine--cone--slotted-cylinder initial condition~\cite{leveque1996high}, with a vertical shift of one (yielding a solution satisfying $1.0\leq u\leq 2.0$).
The discrete initial condition is obtained by the $L^2$ bounds-constrained projection into the Bernstein basis, yielding a uniformly bounds-constrained finite element approximation.
We then interpolate this function into the Lagrange basis for use as the initial condition.

Many techniques have been developed to accurately simulate advective flows. Slope limiters, flux-corrected transport, and 
other specially designed spatio-temporal discretizations have found great success. 
Here, we use a discontinuous Galerkin method with an upwind flux, but do not implement limiters or other stabilization techniques. 
We enforce that the range of the approximation is contained in $[1, 2]$ for every time $t^n$ using a post-processing or monolithic technique.

For any function $f$ defined on the finite element mesh, $\mathcal{T}_h$, let $f_+$ and $f_-$ denote the value of $f$ on the upwind and downwind side of a cell facet, respectively, relative to the flow field $\mathbf{v}$. Let $\mathbf{n}$ denote the unit outward normal vector, and $\Gamma_\text{int}$ the set of interior facets of the mesh.
After integration by parts, the resulting weak form is to seek $u\in\mathcal{V}_h = \left\{u\in L^2(\Omega): u|_{T} \in \mathcal{P}_k(T)\quad \forall T\in \mathcal{T}_h\right\}$ such that 
\begin{multline}
(u_t, w)_\Omega = (u, \textrm{div} (w\mathbf{v}))_\Omega - (u_+, w_+ \mathbf{v}\cdot \mathbf{n}_+ + w_- \mathbf{v}\cdot \mathbf{n}_-)_{\Gamma_\text{int}}\\ - ((\mathbf{v}\cdot \mathbf{n}) u_\text{in}, w)_{\Gamma_\text{inflow}} - ((\mathbf{v}\cdot \mathbf{n}) u, w)_{\Gamma\backslash\Gamma_\text{inflow}}
\end{multline}
for all test functions $w\in \mathcal{V}_h$. Here, the subscripts $\Omega$, $\Gamma_\text{int}$ and $\Gamma\backslash \Gamma_\text{int}$ denote the domain of integration.

Let $\mathcal{T}_h$ denote the triangulation of $\Omega$ obtained by taking a $100\times 100$ square mesh, with each cell subdivided into two triangles. We use spatial elements of degree $2$. We approximate the flow using the Lagrange basis in space, and a variety of temporal integration schemes.
We compare our several bounds-constrained time steppers methods to a very common baseline -- the $\textrm{SSPRK}(3, 3)$ method of~\cite{shu1988efficient}.
This explicit 3-stage method is widely used for hyperbolic problems and, provided the CFL condition is respected, gives accurate solutions.
It is conditionally stable (but not bounds-preserving) for this problem without limiters or other postprocessing techniques.
We compare it to postprocessing and monolithic bounds-preserving schemes, as described above.
In our first set of experiments, we take the feasible set as $\mathcal{K}_h = \mathcal{J}_h^{\mathcal{L}_2, [1, 2]}$ with no interelement continuity enforced. 
Our numerical experiment is straightforward -- for each of several time-stepping schemes, we select the largest time step 
so that $\frac{||u_h(T) - u(T)||_{L^2}}{||u(T)||_{L^2}} \leq 0.05$ -- no more than 5\% relative error -- and report the total time taken to integrate from $0$ to $T$.

In all of these experiments, we solve our linear systems with GMRES using a Euclidean tolerance of $10^{-8}$.
For single-stage linear systems, we precondition this system with $\ell$-AIR~\cite{manteuffel2018nonsymmetric} implemented using {\ttfamily boomeramg} from the Hypre package~\cite{hypre,yang2002boomeramg}.
For the multi-stage linear systems arising in GL(s) and RadauIIA(s) for $s > 1$, we use the stage-segregating Rana preconditioner with the same $\ell$-AIR configuration used for single-stage systems on each diagonal block.
The postprocessing  is performed by solving the constrained $L^2$ projection with \texttt{vinewtonrsls} using Jacobians inverted by SOR-preconditioned conjugate gradients.

We give results for the SSPRK(3,3) and projective schemes based on SSPRK(3, 3), Gauss-Legendre, RadauIIA, and BDF schemes in Table~\ref{tab:time_advection_proj_loose}.
Here, for each method, we report the time step used, the time spent in linear solvers (Solver) and in the postprocessing projection (Update).
Perhaps surprisingly, the excellent stability and accuracy with large time steps, aided by effective preconditioning,
makes the GL(2) and GL(3) methods more efficient than SSPRK(3, 3). While taking slightly longer, the higher-order Radau methods are also quite competitive. 
We also show the the results for the monolithic formulations of GL(1) and BDF(2) in Table~\ref{tab:time_advection_mono_loose}.

For methods using $L^2$ post-processing, we see that, in all cases, the time required for the 
solution of the discrete system dominates the time required for the projection.
While there is no issue with stability, the low order and diffusive nature of backward Euler lead to  poor accuracy.
The Gauss-Legendre family, along with the higher-order RadauIIA and BDF methods, fare much better.

We also consider a variation of this experiment that preserves mass as well as bounds, taking the feasible set to be
\begin{equation*}
  \mathcal{K}_h = \mathcal{J}_h^{\mathcal{L}_2, [1, 2]}\cap \left\{u\in L^2(\Omega) : \int_\Omega u \mathrm{dx} = \int_\Omega u_0 \mathrm{dx}\right\}.
\end{equation*}
The results are shown in Table~\ref{tab:time_advection_mass_proj}. We see that while the time required for the projection increases, the largest step size yielding the desired 
error does not change drastically, and the time spent in the nonlinear projection remains substantially smaller than the time required to advance the unconstrained system. Results for the higher order Gauss-Legendre and RadauIIA methods 
are similar to those without the mass constraint, though with the increase in the time required for the more heavily constrained projection. We do not include these results here.

We see that for some of the methods chosen, the solution of the fully implicit discretized system and the constrained $L^2$ projection 
takes less time than the (unconstrained) solution with $\textrm{SSPRK}(3,3)$. The use of implicit methods allows for accuracy and stability with larger time steps, 
resulting in, in some cases, accurate results with less computational time. The monolithically constrained systems take significantly 
longer than the $\textrm{SSPRK}(3,3)$ method, but avoid computing intermediate unconstrained approximations.

For slightly larger time steps, the unconstrained $\textrm{SSPRK}(3,3)$ method becomes unstable. However, as soon as the time step is small enough that a solution is obtained, the relative error is already smaller than the given tolerance at $0.043$. 
\begin{table}[ht]
  \begin{center}
  \begin{tabular}{l l l l l l}
  Integrator & $k$ & Solver (s) & Update (s)\\
  \hline
  $\textrm{SSPRK}(3,3)$                    & $T / 883$ &  $61.16$ & -\\
  $\textrm{SSPRK}(3,3)$-$\textrm{P}$ & $T / 950$ &  $67.81$ & $47.28$\\ \hline
  $\textrm{GL}(1)$-$\textrm{P}$      & $T / 149 $ & $68.53$ & $8.84$\\
  $\textrm{GL}(2)$-$\textrm{P}$      & $T / 42$ & $33.27$ & $2.86$\\
  $\textrm{GL}(3)$-$\textrm{P}$      & $T / 23$ & $47.78$ & $1.74$\\
  $\textrm{GL}(4)$-$\textrm{P}$      & $T / 15$ & $62.58$ & $1.16$\\
  $\textrm{GL}(5)$-$\textrm{P}$      & $T / 11$ & $81.03$ & $0.96$\\
  $\textrm{RIIA}(1)$-$\textrm{P}$    & $T / 3470$ & $1129.21$ & $119.98$\\
  $\textrm{RIIA}(2)$-$\textrm{P}$    & $T / 93$ & $71.48$ & $6.02$\\
  $\textrm{RIIA}(3)$-$\textrm{P}$    & $T / 36$ & $72.17$ & $2.51$\\
  $\textrm{RIIA}(4)$-$\textrm{P}$    & $T / 21$ & $84.84$ & $1.60$\\
  $\textrm{RIIA}(5)$-$\textrm{P}$    & $T / 15$ & $108.07$ & $1.28$\\ \hline
  $\textrm{BDF}(2)$-$\textrm{P}$     & $T / 203$ & $91.44$ & $11.08$\\
  $\textrm{BDF}(3)$-$\textrm{P}$     & $T / 677$ & $244.04$ &$ 33.76$\\
  \end{tabular}
\caption{Time to solution for $L^2$ post-processing of the numerical solution to the pure advection problem, with the elements parameterized in the Lagrange basis.}
\label{tab:time_advection_proj_loose}
\end{center}
\end{table}
\begin{table}[ht]
  \begin{center}
  \begin{tabular}{l l l l l l}
  Integrator & $k$ & Solver Time (s)\\
  \hline
  $\textrm{GL}(1)$-${VI}$      & $T / 187$ & $228.36$ \\
  $\textrm{BDF}(2)$-${VI}$     & $T / 321$ & $324.91$ \\
  \end{tabular}
\caption{Time to solution for the monolithically-constrained numerical solution to the pure advection problem, with the elements parameterized in the Lagrange basis.}
\label{tab:time_advection_mono_loose}
\end{center}
\end{table}
\begin{table}[ht]
  \begin{center}
  \begin{tabular}{l l l l l l}
  Integrator & $k$ & Solver (s) & Update (s)\\
  \hline
  $\textrm{SSPRK}(3,3)$-$\textrm{P}$ & $T / 998$ & $73.90$ & $78.92$\\
  $\textrm{GL}(1)$-$\textrm{P}$      & $T / 153$ & $68.70$  &$32.78$ \\
  $\textrm{GL}(2)$-$\textrm{P}$      & $T / 42$  & $33.43$ & $16.48$\\
  $\textrm{RIIA}(2)$-$\textrm{P}$    & $T / 97 $ & $72.39$ & $23.71$\\
  $\textrm{BDF}(2)$-$\textrm{P}$     & $T / 205$ & $92.97$ & $30.64$\\
  $\textrm{BDF}(3)$-$\textrm{P}$     & $T / 678$ & $246.26$ & $134.01$\\
  \end{tabular}
\caption{Time to solution for $L^2$, mass-constrained, post-processing of the numerical solution to the pure advection problem, with the elements parameterized in the Lagrange basis.}
\label{tab:time_advection_mass_proj}
\end{center}
\end{table}

\begin{figure}
\begin{subfigure}{.33\textwidth}
\centering
\rotatebox{90}{\hspace{1em} $\mathcal{L}_2$-$\mathrm{SSPRK}(3,3)$}%
\includegraphics[trim={0cm 0cm 0cm 0cm},clip,width=0.9\textwidth] {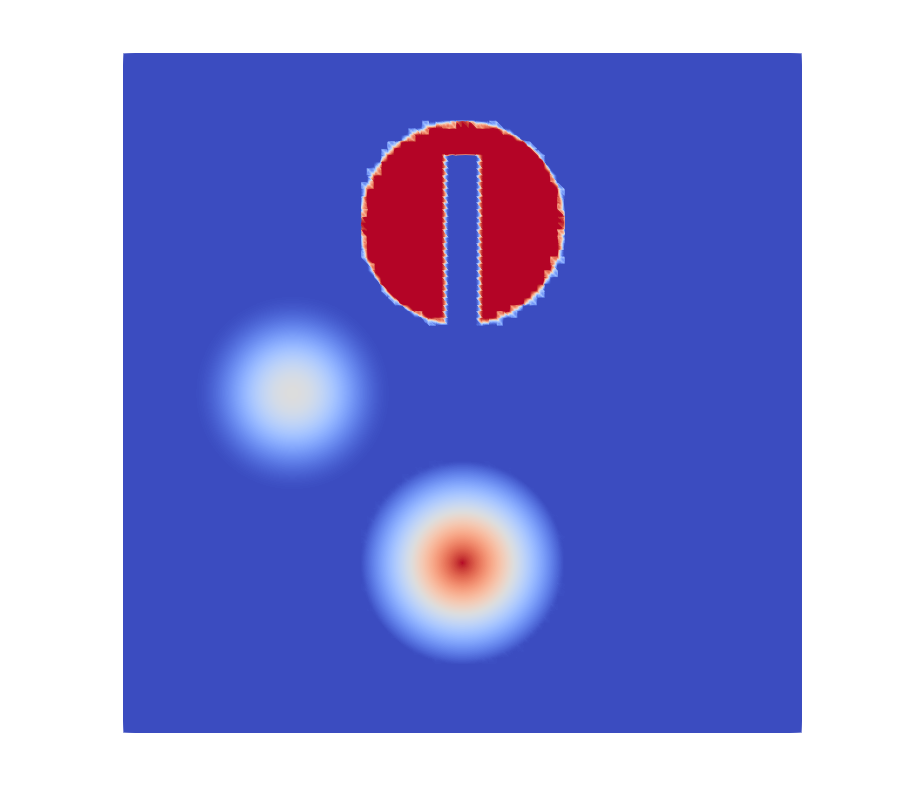}
\end{subfigure}%
\begin{subfigure}{.33\textwidth}
\centering
\includegraphics[trim={0cm 0cm 0cm 0cm},clip,width=0.9\textwidth] {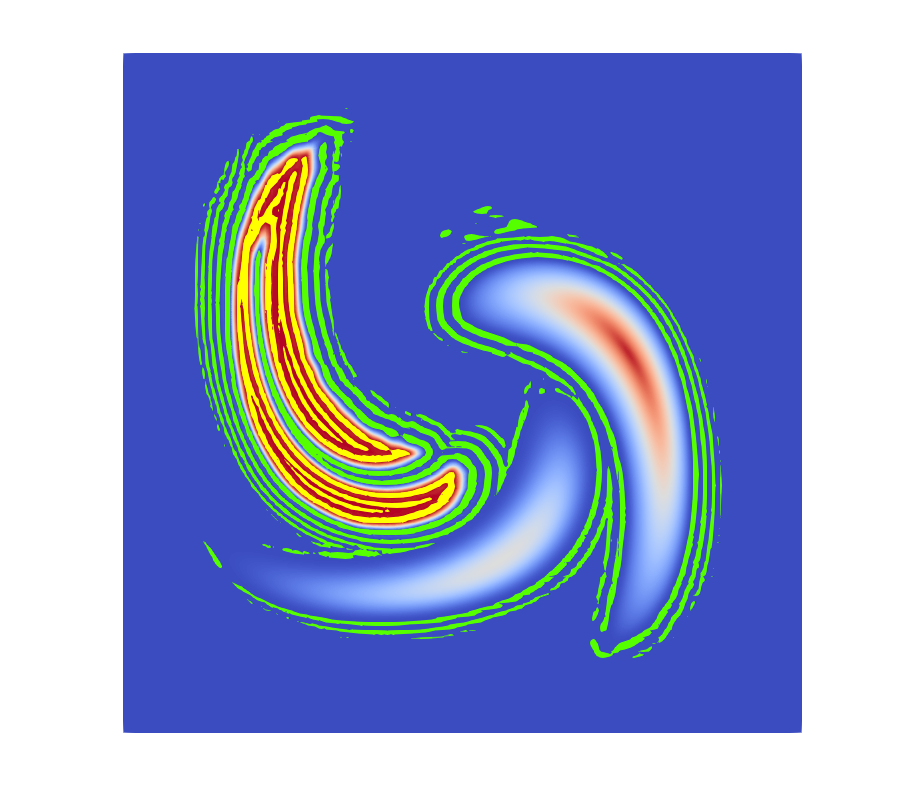}
\end{subfigure}%
\begin{subfigure}{.33\textwidth}
\centering
\includegraphics[trim={0cm 0cm 0cm 0cm},clip,width=0.9\textwidth] {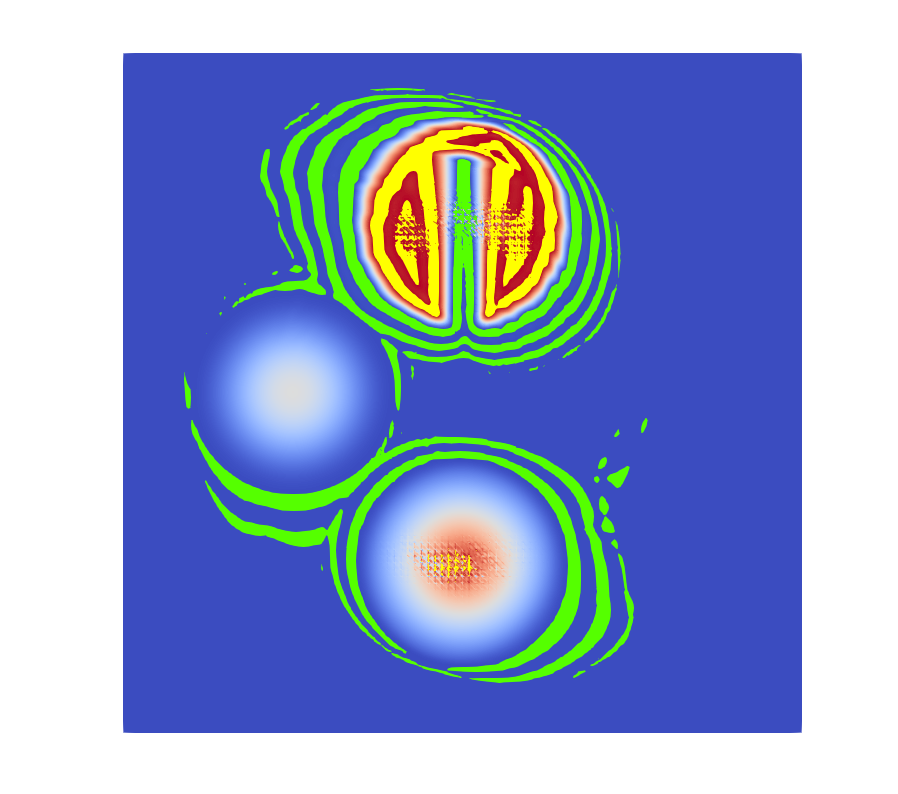}
\end{subfigure}%

\begin{subfigure}{.33\textwidth}
\centering
\rotatebox{90}{\hspace{2em} $\mathcal{L}_2$-$\mathrm{GL}(2)$-$\mathrm{P}$}%
\includegraphics[trim={0cm 0cm 0cm 0cm},clip,width=0.9\textwidth] {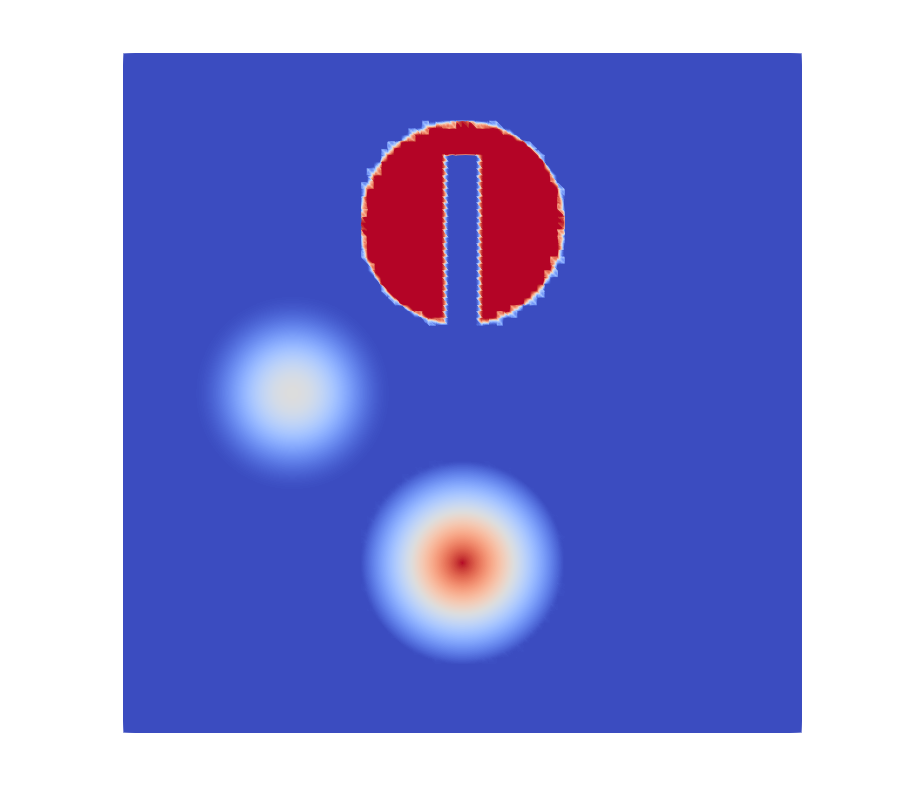}
\end{subfigure}%
\begin{subfigure}{.33\textwidth}
\centering
\includegraphics[trim={0cm 0cm 0cm 0cm},clip,width=0.9\textwidth] {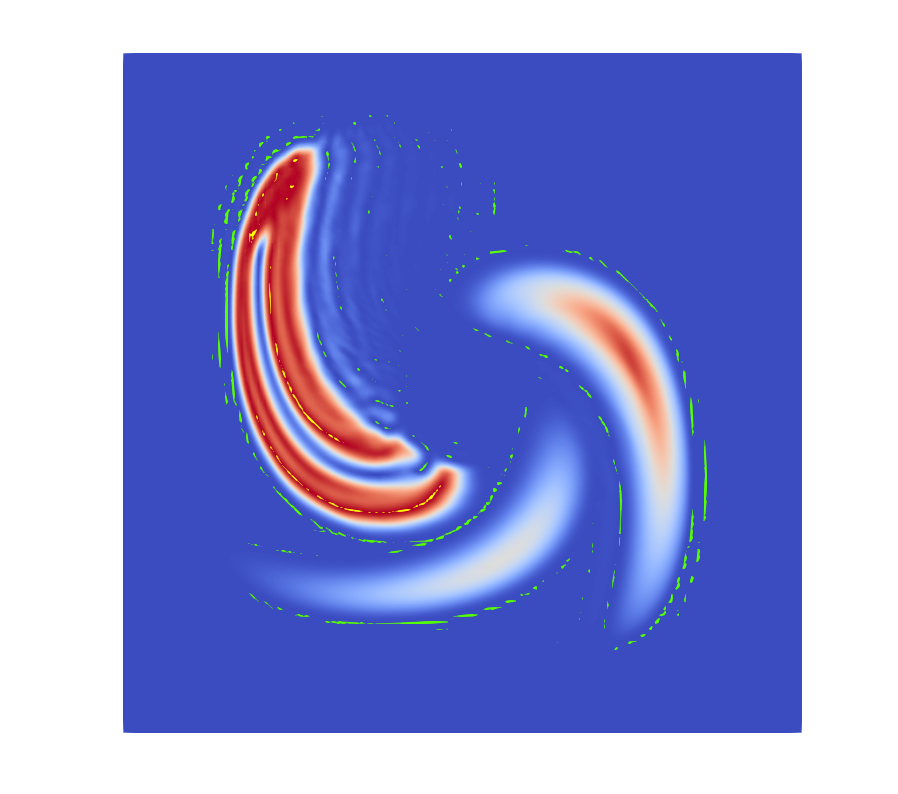}
\end{subfigure}%
\begin{subfigure}{.33\textwidth}
\centering
\includegraphics[trim={0cm 0cm 0cm 0cm},clip,width=0.9\textwidth] {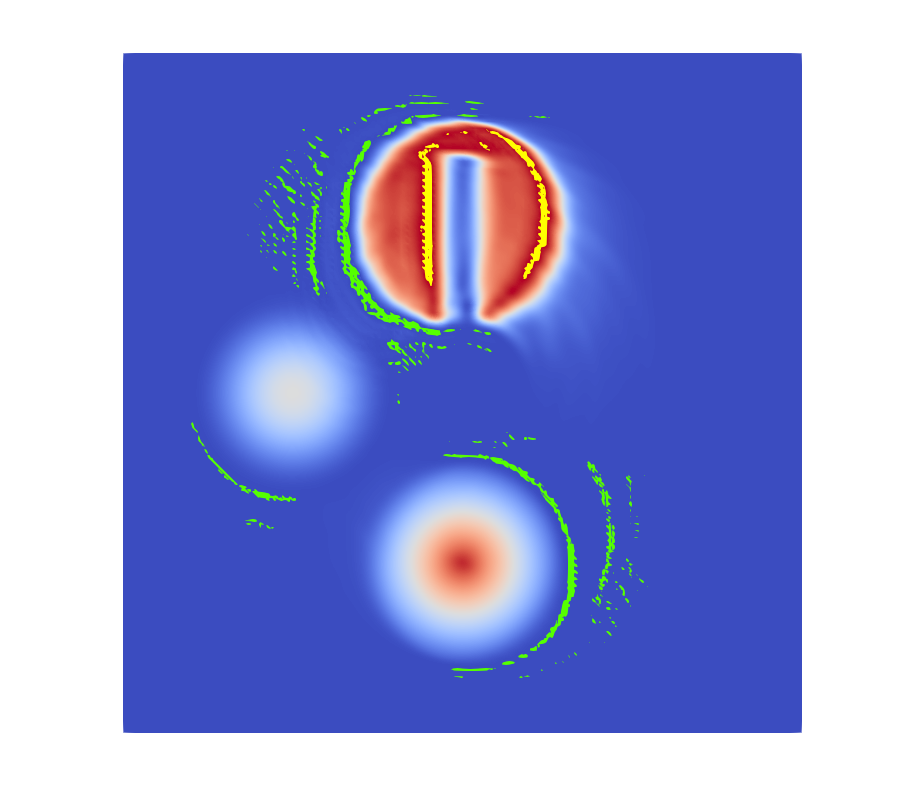}
\end{subfigure}%

\begin{subfigure}{.33\textwidth}
\centering
\rotatebox{90}{\hspace{2em} $\mathcal{L}_2$-$\mathrm{GL}(1)$-$\mathrm{VI}$}%
\includegraphics[trim={0cm 0cm 0cm 0cm},clip,width=0.9\textwidth] {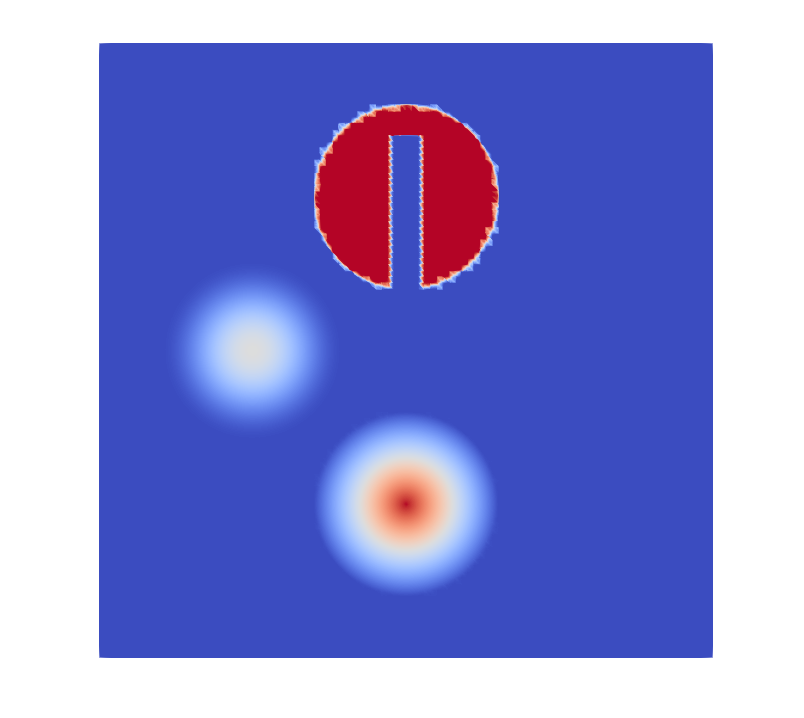}
\subcaption{$t = 0$}
\end{subfigure}%
\begin{subfigure}{.33\textwidth}
\centering
\includegraphics[trim={0cm 0cm 0cm 0cm},clip,width=0.9\textwidth] {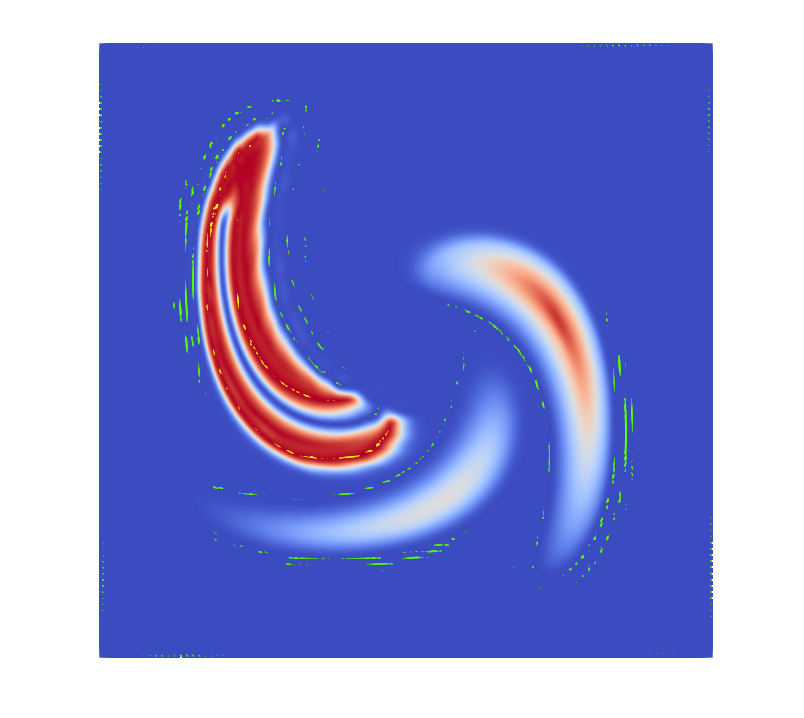}
\subcaption{$t \approx T / 2$}
\end{subfigure}%
\begin{subfigure}{.33\textwidth}
\centering
\includegraphics[trim={0cm 0cm 0cm 0cm},clip,width=0.9\textwidth] {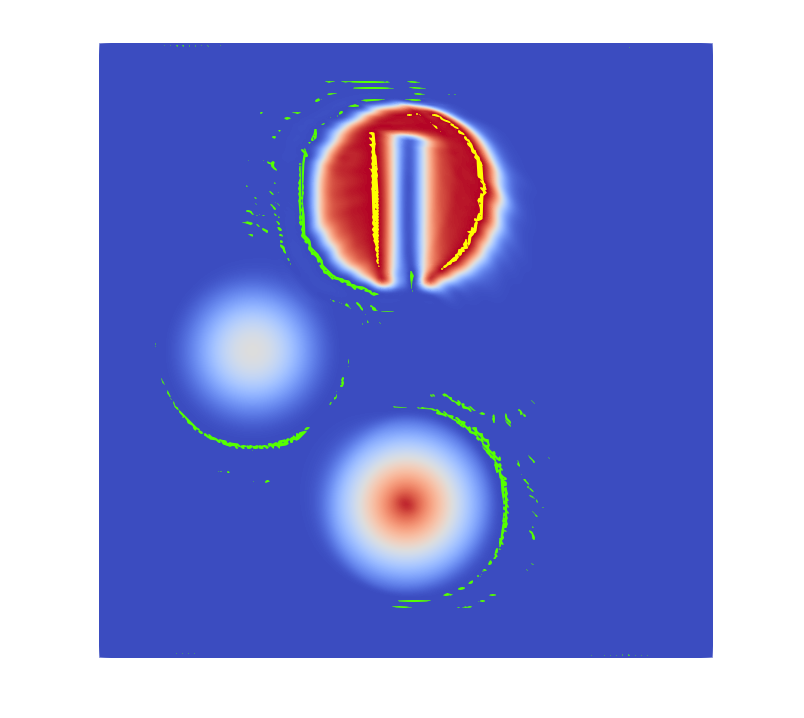}
\subcaption{$t = T$}
\end{subfigure}%

\begin{subfigure}{\textwidth}
\centering
\includegraphics[trim={0cm 0cm 0cm 0cm},clip,width=0.7\textwidth] {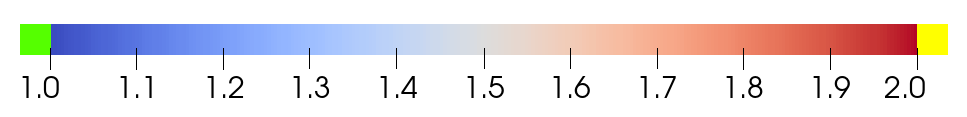}
\end{subfigure}%

\caption{Snapshots of the numerical solution to~\eqref{eq:advec_eq} integrated using (top) $\mathcal{L}_2$-$\mathrm{SSPRK}(3,3)$, (middle) $\mathcal{L}_2$-$\mathrm{GL}(2)$-$\mathrm{P}$, and (bottom) $\mathcal{L}_2$-$\mathrm{GL}(1)$-$\mathrm{VI}$.}
\label{fig:advec_snapshots}
\end{figure}


\subsection{Allen-Cahn}
\label{sec:allen_cahn}
We now turn to the Allen-Cahn equation, which presents a more compelling case for both the monolithic schemes and the uniformly-constrained Bernstein polynomials.
In particular, schemes that allow out-of-bounds values to enter the logarithmic potential (say, at a quadrature point), may simply fail to run to completion.


The Allen-Cahn equation~\eqref{eq:allen_cahn} is the $L^2$ gradient flow of the energy functional
\begin{equation}\label{eq:AC_energy}
  E(u) = \int_\Omega \left(F(u) + \frac{1}{2} \epsilon^2 |\nabla u|^2\right)\mathrm{dx}
\end{equation} 
~\cite{shen2016maximum}. It is easy to see that the solution to the Allen-Cahn equation satisfies the following energy dissipation law:
\begin{equation}
  \frac{\partial}{\partial t} E(u) = -\int_\Omega \left|\frac{\partial u}{\partial t}\right| \mathrm{dx} \leq 0.
\end{equation}
This provides one way to monitor the quality of a numerical solution.

The Flory-Huggins potential will drive phase separation to two critical points in $(-1, 1)$. The specific location corresponds to the wells of the 
potential function. Thus, as $\theta_c$ grows relative to $\theta_0$, the states tend towards the singularities of the logarithm.

To enforce bounds constraints, we choose some $\delta << 1$ such that the minima of the potential function are in the interval $(-1 + \delta, 1 - \delta)$. We then take the 
feasible set to be 
\begin{equation}
  \mathcal{K}_h = \mathcal{J}_h^{\mathcal{B}_k, [- 1 + \delta, 1 - \delta]}.
\end{equation} 
Throughout, we use $\delta = 10^{-8}$. Note that the choice of the Bernstein basis in space is vital. Without proper treatment inside the nonlinear solver, using higher-order Lagrange finite elements, even when constrained at the degrees of 
freedom, can lead to blow up of the logarithm due to sampling at intermediate points. Using the Bernstein basis in space enforces the bounds constraints uniformly, and thereby avoids the singularities. 
The use of a monolithically constrained time stepping method is also critical. Using a projective method requires the computation of an unconstrained approximation before the post-processing procedure enforces the bounds constraints. 
Due to the singularities of the potential, it is often the case that the unconstrained approximation cannot be computed.

\subsubsection{Spinodal Decomposition}

As an initial test we consider a spinodal decomposition. We take the initial condition 
\begin{equation}
  u(x) = 2\textrm{rand}(x) - 1,
\end{equation}
where $\textrm{rand}(x)$ gives a random number between $0$ and $1$. We take $\epsilon = 0.05$, $\theta_0 = 2$, and $\theta_c = 5.0$. We use spatial elements of degree $2$ parameterized using the Bernstein basis, thus taking the discrete feasible set 
\begin{equation}
  \mathcal{K}_h = \mathcal{J}_h^{\mathcal{B}_2, [-1 + \delta, 1 - \delta]}.
\end{equation}
We fix a uniform triangular mesh consisting of $20,000$ elements and simulate to final time $T = 10.0$. We use the $\mathcal{B}_2$-$\textrm{BDF}(4)$-$\textrm{VI}$ method with steps of size $k = 1/2$. Each required starting approximation is computed using $2^4$ steps of the monolithically constrained backward Euler method. 
Snapshots of the evolution are shown in Figure~\ref{fig:AC_spinodal_snaps}, and the associated energy and and maximum and minimum degrees of freedom are show in Figure~\ref{fig:AC_spinodal_data}.
While we do not prove energy dissipation of the method,
we observe this in practice.

\begin{figure}
\begin{subfigure}{.25\textwidth}
\centering
\includegraphics[trim={0cm 0cm 0cm 0cm},clip,width=1.0\textwidth] {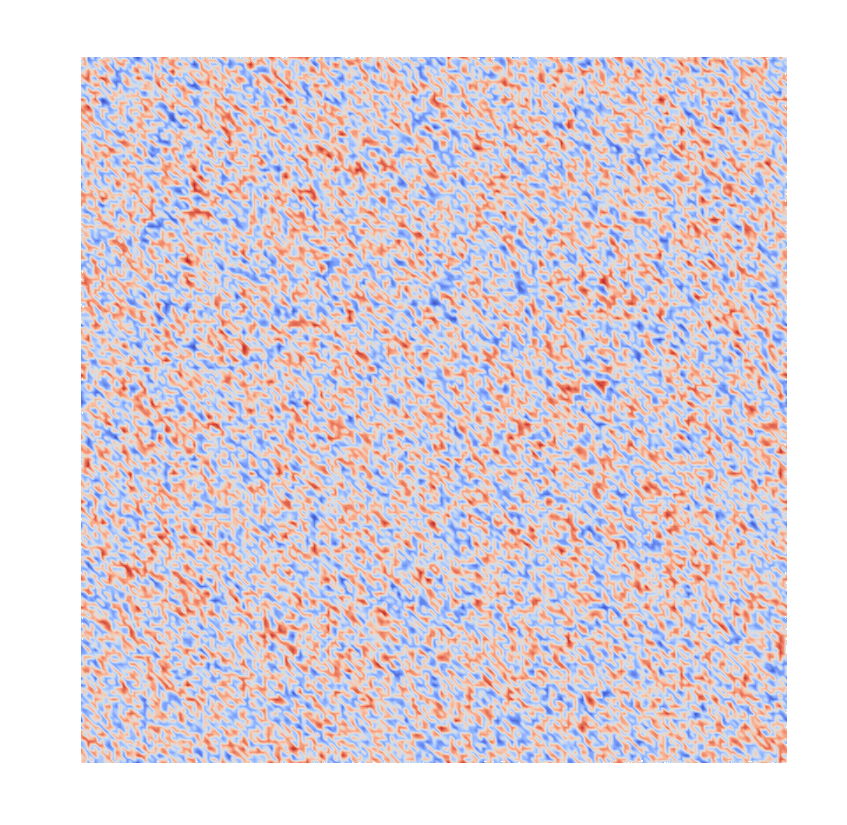}
\subcaption{$t = 0$}
\end{subfigure}%
\begin{subfigure}{.25\textwidth}
\centering
\includegraphics[trim={0cm 0cm 0cm 0cm},clip,width=1.0\textwidth] {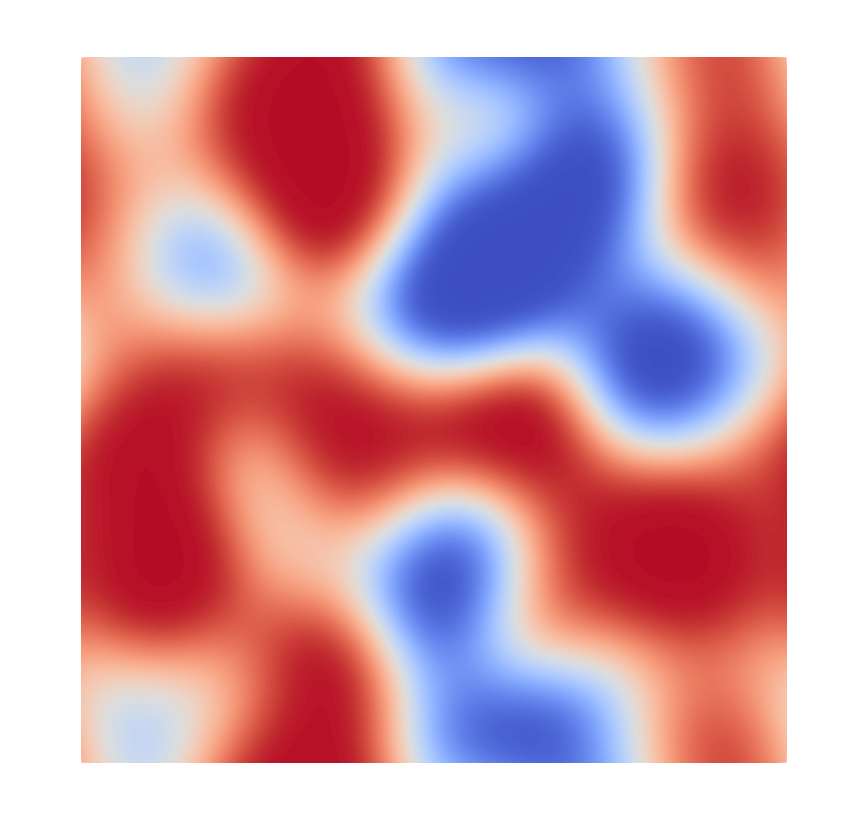}
\subcaption{$t = 1.5$}
\end{subfigure}%
\begin{subfigure}{.25\textwidth}
\centering
\includegraphics[trim={0cm 0cm 0cm 0cm},clip,width=1.0\textwidth] {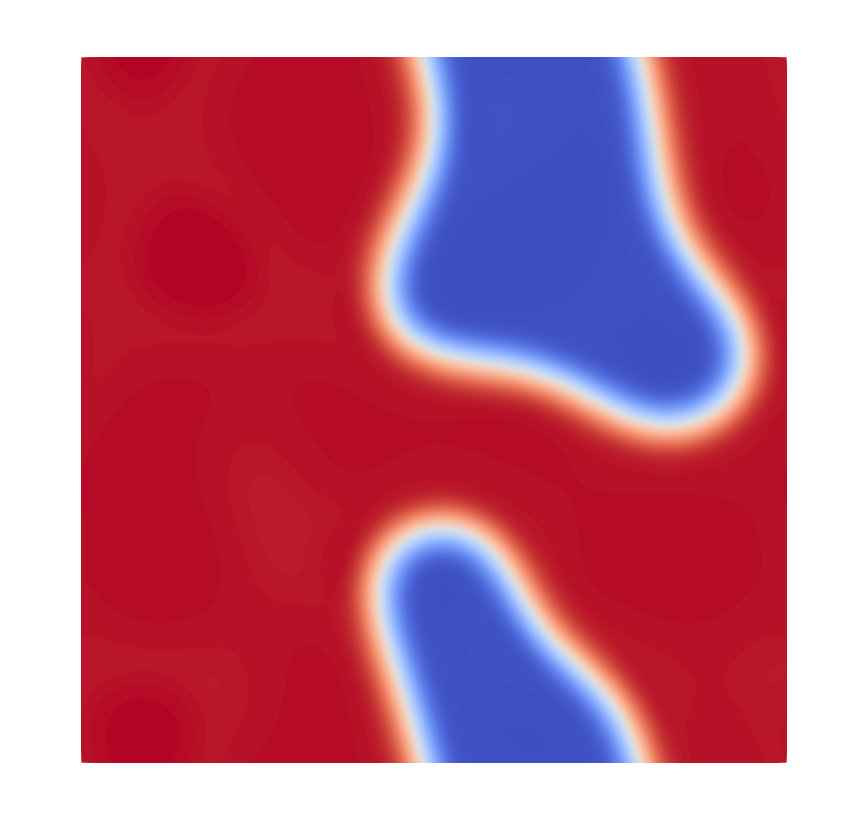}
\subcaption{$t = 3.0$}
\end{subfigure}%
\begin{subfigure}{.25\textwidth}
\centering
\includegraphics[trim={0cm 0cm 0cm 0cm},clip,width=1.0\textwidth] {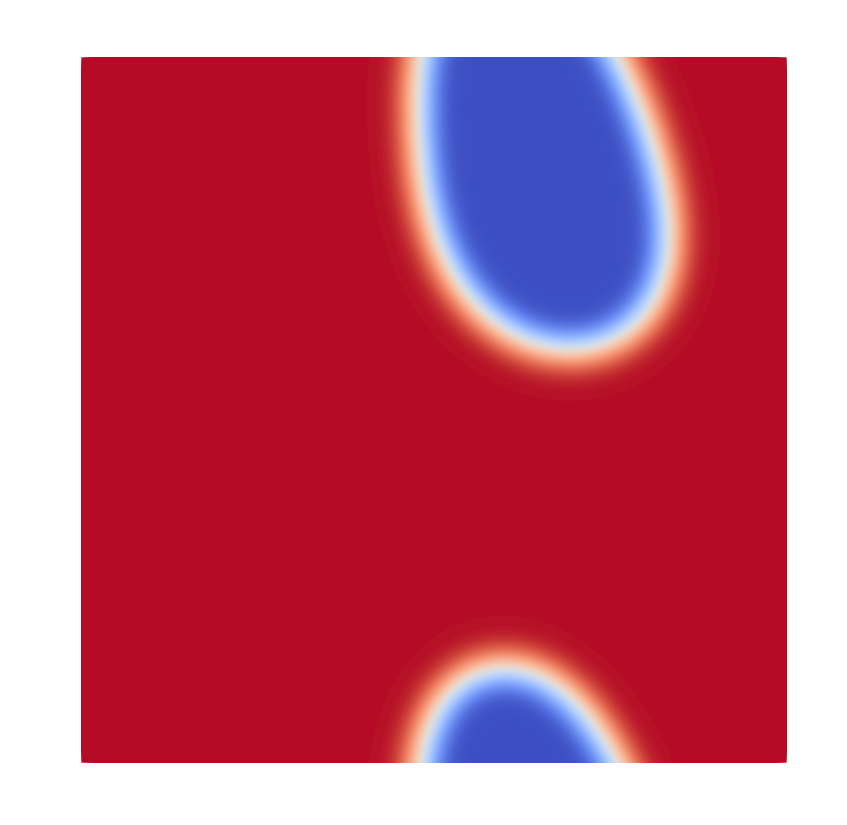}
\subcaption{$t = 10.0$}
\end{subfigure}%

\begin{subfigure}{\textwidth}
\centering
\includegraphics[trim={0cm 0cm 0cm 0cm},clip,width=0.5\textwidth] {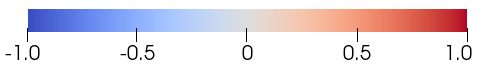}
\end{subfigure}%

\caption{Snapshots of the spinodal decomposition integrated using $\mathcal{B}_2$-$\mathrm{BDF}(2)$-$\mathrm{VI}$.}
\label{fig:AC_spinodal_snaps}
\end{figure}

\pgfplotstableread[col sep=comma]{AC_N100B2BDF4monoTrue_energy.csv}\spinodaldata
\begin{figure}[ht]
  \begin{subfigure}{0.45\textwidth}
  \centering
  \begin{tikzpicture}
  \begin{axis}[width=0.98\textwidth, legend style={at={(0.45, 0.8)}, anchor=west,nodes={scale=0.8, transform shape}, legend columns=1},
    ylabel near ticks,
    xlabel near ticks,
    xtick={0, 2, 4, 6, 8, 10}, 
    xlabel=$t$,
    ymin=-1.25,
    ymax=0.25,
    ylabel={Energy}
        ]

      \addplot[blue,mark=diamond*, mark size=3pt, restrict y to domain=-1:0, unbounded coords=discard, filter discard warning=false, thick] table[x=time, y=energy, col sep=comma] \spinodaldata;
      \addlegendentry[thick] {Energy}

  \end{axis}
  \end{tikzpicture}
\end{subfigure}\hspace{0.01\textwidth}%
\begin{subfigure}{0.45\textwidth}
  \centering
  \begin{tikzpicture}
  \begin{axis}[width=0.98\textwidth, legend style={at={(0.43, 0.5)}, anchor=west,nodes={scale=0.8, transform shape}, legend columns=1},
    ylabel near ticks,
    xlabel near ticks,
    xtick={0, 2, 4, 6, 8, 10}, 
    xlabel=$t$,
    ymin=-1.25,
    ymax=1.25,
    ylabel={Extremal DOFs}
        ]

      \addplot[blue,mark=diamond*, mark size=3pt,  thick] table[x=time, y=maxdof, col sep=comma] \spinodaldata;
      \addlegendentry[thick] {max DOF}
      
      \addplot[brown,mark=*, mark size=3pt, thick] table[x=time, y=mindof, col sep=comma] \spinodaldata;
      \addlegendentry[thick] {min. DOF}

  \end{axis}
  \end{tikzpicture}
\end{subfigure}

\caption{(Left) the energy~\eqref{eq:AC_energy} in the system, and (right) the maximum and minimum degrees of freedom in the approximation.} 
\label{fig:AC_spinodal_data}
\end{figure}

\subsubsection{Anisotropy test}

We now take the bounds-constrained projection of 
\begin{equation}\label{eq:circles}
  u(x) = \begin{cases} 
            1 - \delta, \qquad &\text{if } 0.3 \leq \sqrt{(x - 0.5)^2 + (y - 0.5)^2} \leq 0.4\\
            -1 + \delta, &\text{else}
         \end{cases}
\end{equation}
onto the finite element space as the initial condition. We again take $\epsilon = 0.05$ and $\theta_0 = 2$, but increase to $\theta_c = 8$.

We use an $N\times N$ square mesh, with each square subdivided into two triangles. In Figure~\ref{fig:AC_aniso_data} we plot the energy profiles 
computed using 
$N = 100$, $k = 1/ 8$, and $\mathcal{B}_1$-$\mathrm{BDF}(1)$-$\mathrm{VI}$, $N = 50$, $k = 1/4$ and $\mathcal{B}_2$-$\mathrm{BDF}(2)$-$\mathrm{VI}$, and $N = 25$, $k = 1/2$ and $\mathcal{B}_3$-$\mathrm{BDF}(3)$-$\mathrm{VI}$. Each required starting approximation is computed using $2^s$ steps of the monolithically constrained backward Euler method, with $s$ representing the order of the multistep method. 
We compare against a high-resolution energy profile computed using $\mathcal{B}_2$-$\mathrm{BDF}(2)$-$\mathrm{VI}$ with step size $k = 10^{-3}$ and $N = 100$. In all cases, we see that the energy profile is qualitatively correct. We see that, using higher-order elements and time-stepping methods, better agreement with the high-resolution profile can be obtained with coarser meshes and larger time steps. 
Snapshots of the evolution computed using $N = 50$ and $\mathcal{B}_2$-$\mathrm{BDF}(2)$-$\mathrm{VI}$ are shown in Figure~\ref{fig:AC_aniso_snaps}.

\pgfplotstableread[col sep=comma]{ACex3N100B2BDF2dt_0.001_energy_filtered.csv}\BDFFineData
\pgfplotstableread[col sep=comma]{ACex3N100B1BDF1dt_0.125_energy.csv}\BDFOneData
\pgfplotstableread[col sep=comma]{ACex3N50B2BDF2dt_0.25_energy.csv}\BDFTwoData
\pgfplotstableread[col sep=comma]{ACex3N25B3BDF3dt_0.5_energy.csv}\BDFThreeData

\begin{figure}[ht]
\centering
\begin{subfigure}{0.45\textwidth}
  \centering
\begin{tikzpicture}
\begin{axis}[legend to name=energyLegend, width=0.95\textwidth, legend style={nodes={scale=0.8, transform shape}, column sep=5pt, legend cell align=left, legend columns=2},
  ylabel near ticks,
  xlabel near ticks,
  xtick={0, 5, ..., 35}, 
  xlabel=$t$,
  xmin=-1.0,
  xmax=36.0,
  ymin=-2.65,
  ymax=-1.8,
  ylabel={Energy}
      ]

    \addplot[only marks, blue,mark=diamond, mark size=2.5pt, each nth point=8, restrict y to domain=-3:-1, unbounded coords=discard, filter discard warning=false, thick] table[x=time, y=energy, col sep=comma] \BDFOneData;
    \addlegendentry[thick] {$s = 1,\; N = 100,\; k = 1/8$}
    \addplot[only marks, brown,mark=o, mark size=2.5pt, each nth point=4, restrict y to domain=-3:-1, unbounded coords=discard, filter discard warning=false, thick] table[x=time, y=energy, col sep=comma] \BDFTwoData;
    \addlegendentry[thick] {$s = 2,\; N = 50,\; k = 1/4$}
    \addplot[only marks, darkgray,mark=triangle, mark size=2.5pt, each nth point=2, restrict y to domain=-3:-1, unbounded coords=discard, filter discard warning=false, thick] table[x=time, y=energy, col sep=comma] \BDFThreeData;
    \addlegendentry[thick] {$s = 3,\; N = 25,\; k = 1/2$}
    \addplot[gray, dashed, restrict y to domain=-3:-1, unbounded coords=discard, filter discard warning=false, thick] table[x=time, y=energy, col sep=comma] \BDFFineData;
    \addlegendentry[thick] {High-Res}
\end{axis}
\end{tikzpicture}
\end{subfigure}\hspace{0.01\textwidth}%
\centering
\begin{subfigure}{0.45\textwidth}
\begin{tikzpicture}
\begin{axis}[width=0.95\textwidth,
  ylabel near ticks,
  xlabel near ticks,
  xtick={16.5, 17, ..., 18.5}, 
  xlabel=$t$,
  xmin=16.75,
  xmax=18.75,
  ymin=-2.35,
  ymax=-2.25,
  ylabel={Energy}
      ]

    \addplot[only marks, blue,mark=diamond, mark size=4pt, restrict y to domain=-3:-1, unbounded coords=discard, filter discard warning=false, thick] table[x=time, y=energy, col sep=comma] \BDFOneData;
    \addplot[only marks, brown,mark=o, mark size=4pt, restrict y to domain=-3:-1, unbounded coords=discard, filter discard warning=false, thick] table[x=time, y=energy, col sep=comma] \BDFTwoData;
    \addplot[only marks, darkgray,mark=triangle, mark size=4pt, restrict y to domain=-3:-1, unbounded coords=discard, filter discard warning=false, thick] table[x=time, y=energy, col sep=comma] \BDFThreeData;
    \addplot[gray, dashed, restrict y to domain=-3:-1, unbounded coords=discard, filter discard warning=false, thick] table[x=time, y=energy, col sep=comma] \BDFFineData;
\end{axis}
\end{tikzpicture}
\end{subfigure}%

\begin{subfigure}{1.0\textwidth}
\centering
\ref*{energyLegend}
\end{subfigure}

\caption{Energy profiles of the numerical solution of~\eqref{eq:allen_cahn} subject to the initial condition~\eqref{eq:circles} using $\mathcal{B}_s$-$\mathrm{BDF}(s)$-$\mathrm{VI}$ for $s = 1, 2, 3$. We plot the energy at every whole time on the left and at every time step on the right. } 
\label{fig:AC_aniso_data}
\end{figure}

\begin{figure}


\begin{subfigure}{.25\textwidth}
\centering
\includegraphics[trim={4cm 4cm 4cm 4cm},clip,width=1.0\textwidth] {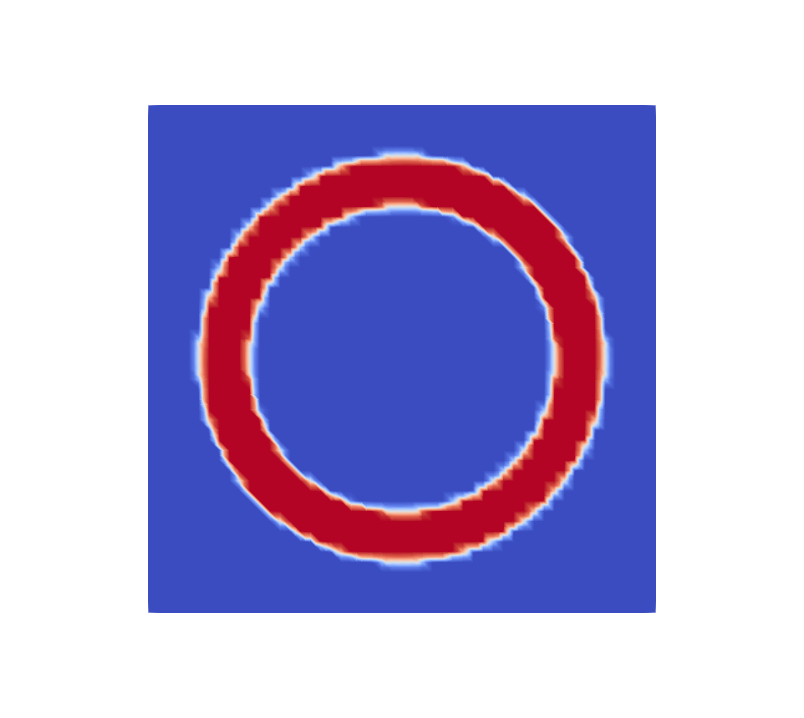}
\subcaption{$t = 0$}
\end{subfigure}%
\begin{subfigure}{.25\textwidth}
\centering
\includegraphics[trim={4cm 4cm 4cm 4cm},clip,width=1.0\textwidth] {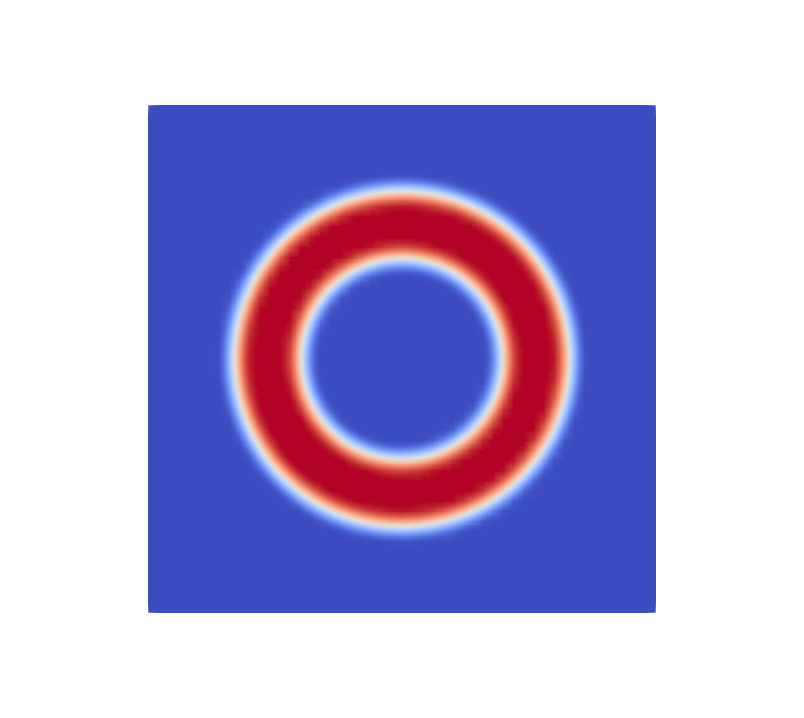}
\subcaption{$t = 10$}
\end{subfigure}%
\begin{subfigure}{.25\textwidth}
\centering
\includegraphics[trim={4cm 4cm 4cm 4cm},clip,width=1.0\textwidth] {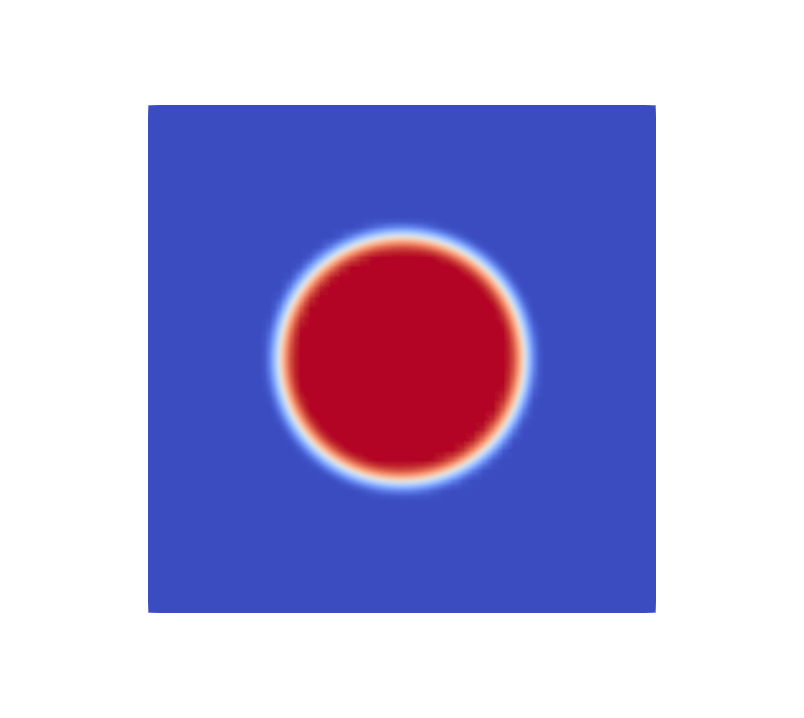}
\subcaption{$t = 20$}
\end{subfigure}%
\begin{subfigure}{.25\textwidth}
\centering
\includegraphics[trim={4cm 4cm 4cm 4cm},clip,width=1.0\textwidth] {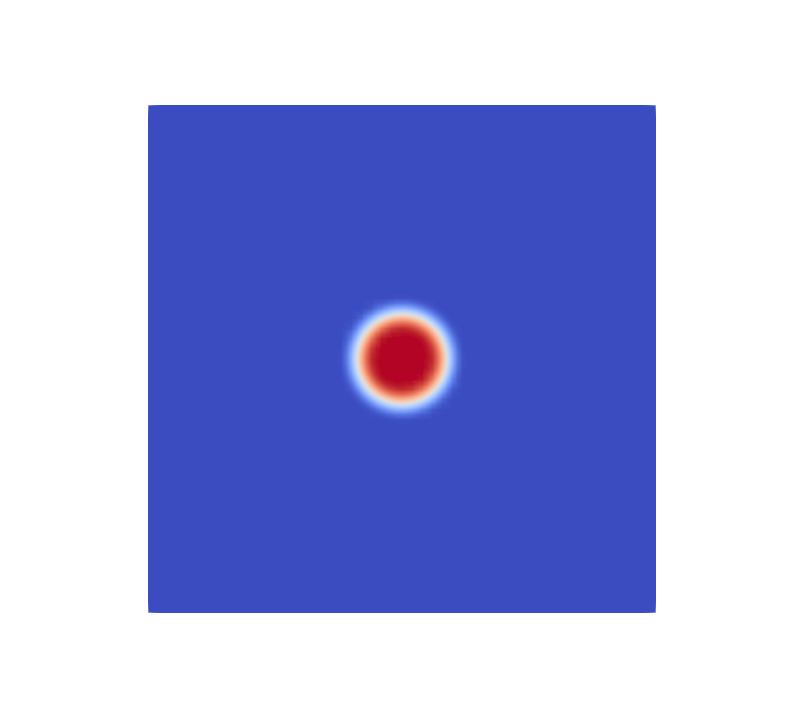}
\subcaption{$t = 30$}
\end{subfigure}%

\begin{subfigure}{\textwidth}
\centering
\includegraphics[trim={0cm 0cm 0cm 0cm},clip,width=0.5\textwidth] {colorbar_AC.png}
\end{subfigure}%

\caption{Snapshots of the evolution with the initial condition~\eqref{eq:circles} integrated using $N = 50$, $k = 1/4$, and $\mathcal{B}_2$-$\mathrm{BDF}(2)$-$\mathrm{VI}$.}
\label{fig:AC_aniso_snaps}
\end{figure}

\section{Conclusions and Future Work}\label{sec:conclusion}

Bounds constraints may be enforced for time-dependent partial differential equations using projective or monolithic techniques. Both of these methods utilize the 
theory of constrained variational inequalities in order to produce approximations which are bounds-constrained (in some sense) at the discrete times.

Projective methods compose a standard time-stepping scheme with a nonlinear projection onto a discrete feasible set.
In our numerical examples, this gives bounds-constrained solutions without lowering the order of accuracy of the approximate solution and allows reuse of existing scalable solvers for the algebraic systems.

For single-stage but possibly multi-step methods, bounds constraints may also be enforced monolithically by recasting the variational equation as a variational inequality.  While this approach is more expensive than the projective methods, it allows for the solution of problems where a nonlinearity is quite sensitive to the bounds constraints.

Also, the methods in this paper focus on enforcement of bounds constraints at the discrete time levels.  For lower-order methods, this is sufficient.
However, higher-order methods may also require enforcement of bounds constraints between those time levels.  In future work, we extend the monolithic approach to the class of implicit collocation-type Runge-Kutta methods where, 
by casting the entire stage-coupled system as a constrained variational inequality, and using a novel reformulation of the system, bounds constraints may be enforced uniformly throughout the temporal domain.
This allows for more refined control of 
the bounds-constraint enforcement, and provides a new class of bounds-constrained integration methods to study.